\newcommand{\remove}[1]{}
\newtheorem{theorem}{Theorem}[section]
\newtheorem{claim}[theorem]{Claim}
\newtheorem{lemma}[theorem]{Lemma}
\newtheorem{proposition}[theorem]{Proposition}
\newtheorem{definition}[theorem]{Definition}
\newtheorem{corollary}[theorem]{Corollary}
\newcommand{\R}{\mathbb{R}}
\newcommand{\F}{\mathbb{F}}
\newcommand{\Z}{\mathbb{Z}}
\newcommand{\B}{\mathcal{B}}
\newcommand{\CC}{\mathcal{C}}
\newcommand{\ip}[1]{\langle{#1}\rangle}
\newcommand{\etal}{{et al.}}
\newcommand{\ind}{\textrm{ind}}
\newcommand{\cay}{\text{Cay}}
\newcommand{\Tr}{\textrm{Tr}}
\newcommand{\sign}{\textrm{sign}}
\title{The independence number of the Birkhoff polytope graph, and applications to maximally recoverable codes}
\author{Daniel Kane\thanks{email: dakane@ucsd.edu.}
 \qquad
Shachar Lovett\thanks{email: slovett@ucsd.edu. Research supported by NSF CCF award 1614023 and a Sloan fellowship.}
 \qquad
 Sankeerth Rao\thanks{email: skaringu@ucsd.edu. Research supported by NSF CCF award 1614023.}\\
 Department of Computer Science\\
 University of California, San Diego}
\begin{document}
\maketitle

\begin{abstract}
Maximally recoverable codes are codes designed for distributed storage which combine
quick recovery from single node failure and optimal recovery from catastrophic failure. Gopalan et al [SODA 2017]
studied the alphabet size needed for such codes in grid topologies and gave a combinatorial characterization for it.

Consider a labeling of the edges of the complete bipartite graph $K_{n,n}$ with labels coming from $\F_2^d$,
that satisfies the following condition: for any simple cycle, the sum of the labels over its edges is nonzero.
The minimal $d$ where this is possible controls the alphabet size needed for maximally recoverable codes in $n \times n$ grid topologies.

Prior to the current work, it was known that $d$ is between $(\log n)^2$ and $n \log n$. We improve both bounds and show that
$d$ is linear in $n$. The upper bound is a recursive construction which beats the random construction.
The lower bound follows by first relating the problem to the independence number of the Birkhoff polytope graph,
and then providing tight bounds for it using the representation theory of the symmetric group.
\end{abstract}

\section{Introduction}
The Birkhoff polytope is the convex hull of $n \times n$ doubly stochastic matrices. The Birkhoff polytope graph
is the graph associated with its $1$-skeleton. This graph is well studied as it plays an important role in combinatorics 
and optimization, see for example the book of Barvinok~\cite{barvinok2002course}. For us, this graph arose naturally in the 
study of certain maximally recoverable codes. Our main technical results are tight bounds on the independence number of 
the Birkhoff polytope graph, which translate to tight bounds on the alphabet size needed for maximally recoverable codes in grid topologies.

We start by describing the coding theory question that motivated the current work.

\subsection{Maximally recoverable codes}
Maximally recoverable codes, first introduced by Gopalan, Huang, Jenkins and Yekhanin~\cite{gopalan2014explicit}, are codes designed for distributed storage which combine
quick recovery from single node failure and optimal recovery from catastrophic failure. More precisely, they
are systematic linear codes which combine two types of redundancy symbols:
local parity symbols, which allow for fast recovery from single symbol erasure; and global parity symbols, which allow for recovery
from the maximal information theoretic number of erasures. This was further studied in~\cite{tamo2014family,balaji2015partial,tamo2014bounds,lalitha2015weight,plank2013sd}.

The present paper is motivated by a recent work of
Gopalan, Hu, Kopparty, Saraf, Wang and Yekhanin~\cite{gopalan2017maximally}, which studied the effect of the topology of the network
on the code design. Concretely, they studied grid like topologies. In the simplest setting, a codeword is viewed as an $n \times n$ array,
with entries in a finite field $\F_{2^d}$, where there is a single parity constraint for each row and each column, and an additional global parity constraint. More generally, a
$T_{n \times m}(a,b,h)$ maximally recoverable code has codewords viewed as an $n \times m$ matrix over $\F_2^d$, with $a$ parity constraints per
row, $b$ parity constraints per column, and $h$ additional global parity constraints. An important problem in this context is, how small can we choose the alphabet size $2^d$
and still achieve information theoretical optimal resiliency against erausers.

Gopalan \etal~\cite{gopalan2017maximally} gave a combinatorial characterization for this problem, in the simplest setting of $m=n$ and $a=b=h=1$.
Their characterization is in terms of labeling the edges of the complete bipartite graph $K_{n,n}$ by elements of $\F_2^d$, which satisfy the property
that in every simple cycle, the sum is nonzero.

Let $[n]=\{1,\ldots,n\}$. Let $\gamma:[n] \times [n] \to \F_2^d$ be a labeling of the edges of the complete bipartite graph $K_{n,n}$ by bit vectors of length $d$.

\begin{definition} A labeling $\gamma:[n] \times [n] \to \F_2^d$ is \emph{simple cycle free} if for any simple cycle $C$ in $K_{n,n}$ it holds that
$$
\sum_{e \in C} \gamma(e) \ne 0.
$$
\end{definition}
Gopalan \etal~\cite{gopalan2017maximally} showed that the question on the minimal alphabet size needed for maximally recoverable codes, reduces to the question of how small can we take $d=d(n)$
so that a simple cycle free labeling exists. Concretely:
\begin{itemize}
\item The alphabet size needed for $T_{n \times n}(1,1,1)$ codes is $2^{d(n)}$.
\item The alphabet size needed for $T_{n \times m}(a,b,h)$ codes is at least $2^{\min(d(n-a+1),d(m-b+1)) / h}$.
\end{itemize}

Before the current work, there were large gaps between upper and lower bounds on $d(n)$.
For upper bounds, as the number of simple cycles in $K_{n,n}$ is $n^{O(n)}$, a random construction with $d=O(n \log n)$ succeeds
with high probability. There are also simple explicit constructions matching the same bounds, see e.g.~\cite{gopalan2014explicit}. In terms of lower bounds,
it is simple to see that $d \ge \log n$ is necessary. The main technical lemma of Gopalan \etal~\cite{gopalan2017maximally} in this context is that in fact
$d \ge \Omega(\log^2 n)$ is necessary. This implies a super-polynomial lower bound on the alphabet size $2^d$ in terms of $n$, which
is one of their main results.

We improve on both upper and lower bounds and show that $d$ is linear in $n$. We note that our construction improves upon the random construction,
which for us was somewhat surprising. For convenience we describe it when $n$ is a power of two, but note that it holds for any $n$ with minimal modifications.

\begin{theorem}[Explicit construction]\label{thm:construct}
Let $n$ be a power of two. There exists $\gamma:[n] \times [n] \to \F_2^{d}$ for $d=3n$ which is simple cycle free.
\end{theorem}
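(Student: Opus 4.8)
The plan is to build $\gamma$ recursively, halving $n$ at each step, so that the dimension obeys a recurrence of the shape $d(n)\le d(n/2)+3n/2$ with a trivial base case; this telescopes to $d(n)\le 3n$. Throughout, identify $[n]$ with $\{0,1,\dots,n-1\}$, let $\sigma(v)$ be the top bit of $v$, and let $\bar v=v\bmod(n/2)$ be the result of deleting that bit, so the fibre of $v$ under $v\mapsto\bar v$ is the antipodal pair $\{\bar v,\ \bar v+n/2\}$. For a simple cycle $C$ in $K_{n,n}$ write $V_L(C),V_R(C)$ for its (equal-size) sets of left and right vertices. It will also be convenient to keep in mind the equivalent ``permutation'' picture: $\gamma$ is simple cycle free iff the map $\pi\mapsto\sum_i\gamma(i,\pi(i))$ takes distinct values on any two permutations whose matchings differ by a single cycle.

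First I would set up the recursive layer. Inductively take a simple cycle free $\gamma_{n/2}\colon[n/2]^2\to\F_2^{3n/2}$, reserve the first $3n/2$ coordinates of $\F_2^{3n}$ as a ``recursion block,'' and let the recursion-block part of $\gamma(i,j)$ be $\gamma_{n/2}(\bar i,\bar j)$. For any simple cycle $C$, the recursion-block part of $\sum_{e\in C}\gamma(e)$ equals $\sum_{(i,j)\in C}\gamma_{n/2}(\bar i,\bar j)$, which is $\gamma_{n/2}$ evaluated on the subgraph of $K_{n/2,n/2}$ consisting of those edges hit an odd number of times by the image walk of $C$. When $v\mapsto\bar v$ is injective on $V_L(C)$ and on $V_R(C)$, every image edge is hit exactly once, so that image walk is itself a simple cycle of $K_{n/2,n/2}$, and by induction the recursion block alone already contributes a nonzero value. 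Hence the only cycles still needing attention are those on which $v\mapsto\bar v$ identifies two left vertices or two right vertices — equivalently, $C$ contains an antipodal pair on one of the two sides — and for these I have a fresh budget of $3n/2$ coordinates, an auxiliary labeling $\delta\colon[n]^2\to\F_2^{3n/2}$, required only to satisfy $\sum_{e\in C}\delta(e)\neq0$ for every such ``degenerate'' cycle (on all other cycles $\delta$ may be arbitrary).

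Designing $\delta$ is the heart of the argument, and the step I expect to be the main obstacle. The easy first ingredient is a single coordinate equal to $\sigma(i)\sigma(j)$: on a $4$-cycle through $i_1,i_2\in V_L$ and $j_1,j_2\in V_R$ it evaluates to $(\sigma(i_1)+\sigma(i_2))(\sigma(j_1)+\sigma(j_2))$, which disposes of every $4$-cycle using both halves on both sides. For longer degenerate cycles I would spend the remaining coordinates in three blocks of size $n/2$ indexed by the ``low value'' $w\in\{0,\dots,n/2-1\}$ — one block to witness a left antipodal pair, one for a right antipodal pair, and one to record how $C$ crosses the top-bit cut so that those witnesses cannot cancel. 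Concretely one wants the $\delta$-value of an edge $(i,j)$, in the coordinate indexed by $\bar i$ (resp.\ $\bar j$), to encode $\sigma(i)$ together with a bit distinguishing the two traversals of $C$ at the antipodal left (resp.\ right) vertex, chosen so that around $C$ everything telescopes except the contribution of a genuine antipodal pair. The delicate point is to make a single such choice work simultaneously for \emph{all} degenerate cycles, within the $3n/2$-coordinate budget and covering the left/right-symmetric cases; this is exactly the place where the construction improves on the probabilistic one, which only yields $d=O(n\log n)$.

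Finally I would unwind the recursion: the recursion block handles every simple cycle with no antipodal pair on either side, $\delta$ handles the remaining ones, so $\gamma$ is simple cycle free; and the recurrence $d(n)\le d(n/2)+3n/2$ with base case $d(1)=0$ (so that $\gamma_{n/2}$ indeed fits in $3n/2$ coordinates) gives $d(n)\le 3n-3\le 3n$ by induction, proving the theorem for $n$ a power of two.
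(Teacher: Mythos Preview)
Your recursive framework and the ``injective case'' are correct: if the mod-$n/2$ map is injective on both $V_L(C)$ and $V_R(C)$, the image is a simple cycle in $K_{n/2,n/2}$ and the recursion block alone is nonzero. The gap is exactly where you flag it: you never actually construct $\delta$. You describe one bit $\sigma(i)\sigma(j)$ that handles the doubly-crossing $4$-cycles, and then say you ``would spend the remaining coordinates in three blocks of size $n/2$'' encoding top bits so that ``everything telescopes except the contribution of a genuine antipodal pair'' --- but no concrete assignment is given, and it is not at all clear that such a telescoping scheme exists within a $3n/2$-bit budget. The class of ``degenerate'' cycles (those containing at least one antipodal pair on some side) is enormous and has no obvious local witness; this is precisely the place where a vague plan is not enough.

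The paper sidesteps this difficulty by changing what the auxiliary $3n/2$ bits are asked to do. Rather than requiring $\delta$ to be nonzero on every degenerate cycle, it uses those bits to force any zero-sum simple cycle to lie entirely in one of the four quadrants $L\times L$, $L\times U$, $U\times L$, $U\times U$ (with $L=\{1,\dots,n/2\}$, $U=\{n/2+1,\dots,n\}$). Concretely: the first $n$ bits of $\gamma_n(x,y)$ are $e_x^n$ when $y\le n/2$ and $0^n$ otherwise; if $Y=V_R(C)$ meets both $L$ and $U$, some $x_{i+1}$ has one neighbour in $L$ and one in $U$, and the $x_{i+1}$-coordinate of the sum is $1$. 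So a zero-sum $C$ has $Y\subset L$ or $Y\subset U$. Given this, the next $n/2$ bits are $e_{y'}^{n/2}$ when $x\le n/2$ and $0^{n/2}$ otherwise, and the same argument forces $X\subset L$ or $X\subset U$. Once $C$ lives in a single quadrant, the mod-$n/2$ map is a bijection on that quadrant, so the recursion block is exactly $\gamma_{n/2}$ on a simple cycle and is nonzero by induction. This is strictly stronger than ``no antipodal pair on either side'' and, crucially, admits a completely explicit witness in $n+n/2=3n/2$ coordinates --- the missing piece in your proposal.
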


Our main technical result is a nearly matching lower bound.

\begin{theorem}[Lower bound]\label{thm:lower_bound_dim}
Let $\gamma:[n] \times [n] \to \F_2^d$ be simple cycle free. Then $d \ge n/2-2$.
\end{theorem}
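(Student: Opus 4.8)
The plan is to (i) reduce the claim to an upper bound on the independence number $\alpha(G_n)$ of the Birkhoff polytope graph $G_n$, and then (ii) prove that bound by a weighted Hoffman (ratio) bound whose eigenvalues are computed using the representation theory of $S_n$. For (i): identify a perfect matching of $K_{n,n}$ with a permutation $\pi\in S_n$ via the edges $(i,\pi(i))$, and set $s(\pi):=\sum_{i=1}^n\gamma(i,\pi(i))\in\F_2^d$. It is classical that two permutation matrices span an edge of the Birkhoff polytope exactly when the corresponding permutations $\pi,\sigma$ satisfy that $\pi^{-1}\sigma$ is a single nontrivial cycle; in that case the symmetric difference of the two matchings is a single simple cycle $C$ of $K_{n,n}$, so $s(\pi)+s(\sigma)=\sum_{e\in C}\gamma(e)\neq 0$ and hence $s(\pi)\neq s(\sigma)$. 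Thus $s$ is a proper coloring of $G_n$ with $2^d$ colors, some color class is an independent set of size at least $n!/2^d$, and therefore $2^d\ge n!/\alpha(G_n)$. It suffices to prove $\alpha(G_n)\le n!\cdot 2^{2-n/2}$.

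Next, the spectral setup. Since $G_n=\cay(S_n,\CC)$ with $\CC=\bigcup_{k=2}^n\{k\text{-cycles}\}$ a union of conjugacy classes, $G_n$ is a normal Cayley graph, and for any real weights $b_2,\dots,b_n$ the central element $B=\sum_k b_k C_k\in\R[S_n]$ (with $C_k$ the sum of the $k$-cycles) gives a symmetric matrix with zero diagonal and support inside $E(G_n)$, acting on the isotypic component of $S^\lambda$ ($\lambda\vdash n$) as the scalar $\mu_\lambda=\sum_{k=2}^n b_k\,|C_k|\,\widehat{\chi}^{\lambda}(k\text{-cycle})$, where $\widehat\chi^\lambda=\chi^\lambda/\dim S^\lambda$. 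The all-ones vector is an eigenvector with eigenvalue $\mu_{(n)}=\sum_k b_k|C_k|$, so the weighted ratio bound gives
\[
\alpha(G_n)\;\le\; n!\cdot\frac{-\mu_{\min}}{\mu_{(n)}-\mu_{\min}}\;\le\; n!\cdot\frac{-\mu_{\min}}{\mu_{(n)}}
\]
whenever $\mu_{(n)}>0>\mu_{\min}:=\min_\lambda\mu_\lambda$. Hence it is enough to exhibit nonnegative weights $b_k$ for which $\mu_{(n)}\ge 2^{n/2-2}\cdot(-\mu_\lambda)$ for every $\lambda\neq(n)$, i.e.\ every nontrivial eigenvalue of $B$ is at least $-2^{2-n/2}\mu_{(n)}$.

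This is where the symmetric-group representation theory does the work. The normalized characters $\widehat\chi^\lambda(k\text{-cycle})$ are evaluated and estimated via the Murnaghan--Nakayama rule (so that, e.g., $\widehat\chi^{(n-1,1)}(k\text{-cycle})=\tfrac{n-k-1}{n-1}\ge 0$ for $k\le n-1$, and more generally near-trivial $\lambda$ give nonnegative values on short cycles), via the Jucys--Murphy elements (whose symmetric functions act on $S^\lambda$ by symmetric functions of the cell contents of $\lambda$), and via upper bounds showing $|\widehat\chi^\lambda(k\text{-cycle})|$ is small when $\lambda$ is far from the two extremes. One useful structural point is that every cycle of \emph{odd} length is an even permutation, so restricting the support of the weights to odd $k$ forces $\mu_\lambda=\mu_{\lambda'}$ for conjugate partitions $\lambda,\lambda'$; this collapses the dangerous ``near-sign'' representations onto the ``near-trivial'' ones, where the characters have a favorable sign. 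Combining such observations with the character estimates, one chooses the $b_k$ so that $\mu_{(n)}$ is as large as possible while every other $\mu_\lambda$ is at least $-2^{2-n/2}\mu_{(n)}$, and plugging into the displayed bound yields $\alpha(G_n)\le n!\,2^{2-n/2}$ and hence $d\ge n/2-2$.

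The main obstacle is precisely this last point: producing weights for which \emph{every} nontrivial eigenvalue lies within a $2^{-n/2}$ factor of the trivial one. For representations that are neither near-trivial nor near-sign, and for short cycle lengths such as $3$-cycles, a single normalized character can be as large as $1/\mathrm{poly}(n)$ in absolute value, which by itself would only yield $d=\Omega(\log n)$; extracting the exponential saving requires either genuinely exponential bounds on the normalized characters at the cycle lengths that carry most of the weight, or a delicate cancellation between the contributions of different cycle lengths to $\mu_\lambda$, and making this hold uniformly over all $\lambda\vdash n$ is the technical heart of the argument.
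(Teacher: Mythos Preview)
Your reduction in part (i) is correct and matches the paper's Claim~\ref{claim:reduce}. The divergence, and the gap, is in part (ii).

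The paper does \emph{not} bound $\alpha(\B_n)$ via a weighted Hoffman/ratio bound. It explicitly remarks that Hoffman-type bounds run into trouble here because the extremal eigenvalues are governed by low-dimensional irreducibles such as $(n-1,1)$, whose normalized character on a $k$-cycle is $(n-k-1)/(n-1)$ and hence only polynomially small in absolute value; this yields at best $\alpha(\B_n)=O((n-1)!)$. Instead, the paper uses a structure-versus-randomness dichotomy with induction on odd $n$ (after first restricting $A$ to a single sign class). If for some even $m$ the action of $A$ on ordered $m$-tuples is noticeably non-uniform, a $c^m/(n)_m$ fraction of $A$ fixes a specific $m$-tuple, and one recurses on $\B_{n-m}$. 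Otherwise, near-uniformity on $m$-tuples bounds the Frobenius norm of the permutation module $M^{h_k}$ applied to $\zeta=\frac{1}{|A|}\sum_{\pi\in A}\pi$; via Young's rule this controls $\chi^{h_m}(\varphi)$ for all hooks $h_m$, and then Murnaghan--Nakayama on the full $n$-cycle gives $\langle\varphi,\psi\rangle>0$, meaning two elements of $A$ differ by an $n$-cycle. The low-dimensional representations are thus handled by the structured branch of the dichotomy, not by the spectral inequality.

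Your proposal never leaves the spectral framework: you need nonnegative weights $b_k$ on cycle classes for which every nontrivial eigenvalue satisfies $\mu_\lambda\ge -2^{2-n/2}\mu_{(n)}$, and you yourself flag this as the unresolved ``technical heart.'' You neither exhibit such weights nor argue that they exist; the observations you list (restricting to odd $k$ to pair $\lambda$ with $\lambda'$, Jucys--Murphy, character bounds away from the extremes) are all reasonable ingredients but do not by themselves deliver an exponentially small ratio uniformly over all $\lambda\vdash n$. As written, the proposal is a plan whose decisive step is missing, and it is precisely the step the paper circumvents by switching to the inductive dichotomy rather than confronting the eigenvalue problem head-on.
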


\subsection{Labeling by general Abelian groups}
The definition of simple cycles free labeling can be extended to labeling by general Abelian groups, not just $\F_2^d$.
Let $H$ be an Abelian group, and let $\gamma:[n] \times [n] \to H$.
We say that $\gamma$ is simple cycle free if for any simple cycle $C$,
$$
\sum_{e \in C} \sign(e) \gamma(e) \ne 0.
$$
where $\sign(e) \in \{-1,1\}$ is an alternating sign assignment to the edges of $C$ (these are sometimes called circulations).
We note that the analysis of Gopalan \etal~\cite{gopalan2017maximally} can be extended to non-binary alphabets $\F_p$,
in which case their combinatorial characterization extends to the one above with $H=\F_p$.

\begin{theorem}\label{thm:lower_bound_dim2}
Let $H$ be an Abelian group. Let $\gamma:[n] \times [n] \to H$ be simple cycle free. Then $|H| \ge 2^{n/2-2}$.
\end{theorem}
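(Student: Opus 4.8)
The plan is to reduce the statement to an upper bound on the independence number $\alpha(B_n)$ of the Birkhoff polytope graph $B_n$, and then to prove $\alpha(B_n)\le n!/2^{\,n/2-2}$ by harmonic analysis on $S_n$. Nothing in the reduction or in this bound uses anything about $H$ beyond commutativity, so the same chain of inequalities also reproves Theorem~\ref{thm:lower_bound_dim} (take $H=\F_2^d$, so $|H|=2^d$); the inequality $\alpha(B_n)\le n!/2^{\,n/2-2}$ is the common technical core.

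For the reduction, given a simple cycle free $\gamma\colon[n]\times[n]\to H$ I define $\Sigma\colon S_n\to H$ by $\Sigma(\sigma)=\sum_{i=1}^{n}\gamma(i,\sigma(i))$, the $\gamma$-sum over the perfect matching $\{(i,\sigma(i)):i\in[n]\}$ of $K_{n,n}$. Recall that $B_n$ is the graph on $S_n$ in which $\sigma,\tau$ are adjacent exactly when $\sigma^{-1}\tau$ is a single nontrivial cycle, and that in this case the matchings of $\sigma$ and of $\tau$ agree outside the support of that cycle and there differ along one simple cycle $C$ of $K_{n,n}$; hence $\Sigma(\sigma)-\Sigma(\tau)=\pm\sum_{e\in C}\sign(e)\gamma(e)$, which is nonzero by hypothesis. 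So $\Sigma$ is a proper coloring of $B_n$ with color set $H$, its color classes are independent sets, and $n!=|S_n|=\sum_{h\in H}|\Sigma^{-1}(h)|\le |H|\cdot\alpha(B_n)$, i.e.\ $|H|\ge n!/\alpha(B_n)$. Thus it suffices to prove $\alpha(B_n)\le n!/2^{\,n/2-2}$.

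To bound $\alpha(B_n)$ I would use that $B_n$ is a \emph{normal} Cayley graph: $B_n=\mathrm{Cay}(S_n,S)$ with $S=\bigcup_{j=2}^{n}C_j$ the union of the conjugacy classes $C_j$ of $j$-cycles, hence closed under conjugation. Consequently, for any class function $w=\sum_\lambda\widehat w(\lambda)\chi_\lambda$ on $S_n$ (sum over irreducible characters $\chi_\lambda$, representations $\rho_\lambda$ of dimension $d_\lambda$) and any $A\subseteq S_n$,
\[
 \sum_{\sigma,\tau\in A} w(\sigma^{-1}\tau)\;=\;\sum_{\lambda}\widehat w(\lambda)\,\Bigl\|\textstyle\sum_{\sigma\in A}\rho_\lambda(\sigma)\Bigr\|_F^{2}.
\]
The plan is to exhibit a class function $w$ that is (i) positive definite, $\widehat w(\lambda)\ge 0$ for every $\lambda$, and (ii) pointwise $\le 0$ on every $\pi$ with at least two nontrivial cycles --- i.e.\ on every $\pi\ne e$ that is a non-neighbor of $e$ in $B_n$. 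If $A$ is independent in $B_n$, then every off-diagonal term $w(\sigma^{-1}\tau)$ is of this second kind, so the left side is at most $|A|\,w(e)$ by (ii), while the right side is at least $\widehat w(\mathrm{triv})\,|A|^{2}$ by (i); hence $\alpha(B_n)\le w(e)/\widehat w(\mathrm{triv})$, and it remains to design $w$ making this ratio at most $n!/2^{\,n/2-2}$. Representation theory enters through the values of $\chi_\lambda$ on single cycles, computed by the Murnaghan--Nakayama (border-strip) rule: these values control both condition (ii) and the Fourier coefficients $\widehat w(\lambda)=\frac{1}{n!}\bigl(d_\lambda w(e)+\sum_{\mu\ne 1^n}|C_\mu|\,\chi_\lambda(\mu)\,w(\mu)\bigr)$ entering condition (i).

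The main obstacle is (i): producing a test function whose ratio $w(e)/\widehat w(\mathrm{triv})$ is exponentially --- not just polynomially --- small. The naive choice, shifting the adjacency operator of $B_n$ to kill its most negative eigenvalue (Hoffman's ratio bound), is far too weak here: $B_n$ already has negative eigenvalues of order its degree when $n$ is even (from the sign representation), and even for $n$ odd the bound is only $\alpha(B_n)\le n!/\mathrm{poly}(n)$. To gain the factor $2^{n/2}$ one has to exploit the full union-of-conjugacy-classes structure of $S$ and pick $w$ adapted to the ``two disjoint cycles'' description of the non-edges of $B_n$ --- morally, $w$ must detect the mechanism by which $S_n$ contains independent sets of size $2^{n/2}$ (subgroups all of whose nonidentity elements split into at least two cycles) --- and then verify $\widehat w(\lambda)\ge 0$ for all $\lambda$ from the single-cycle character formula. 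Once $\alpha(B_n)\le n!/2^{\,n/2-2}$ is in hand, the reduction gives $|H|\ge n!/\alpha(B_n)\ge 2^{\,n/2-2}$, which is Theorem~\ref{thm:lower_bound_dim2}; the matching lower bound $\alpha(B_n)\ge n!/2^{O(n)}$ (so that the exponent is tight up to a constant) is exactly what the explicit construction of Theorem~\ref{thm:construct} supplies.
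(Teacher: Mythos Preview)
Your reduction from Theorem~\ref{thm:lower_bound_dim2} to an upper bound on $\alpha(\B_n)$ is correct and is exactly what the paper does in Claim~\ref{claim:reduce}: the map $\Sigma$ you define is a proper coloring of $\B_n$ by $H$, so the most popular color class is an independent set of size at least $n!/|H|$, and the theorem follows from $\alpha(\B_n)\le n!/2^{n/2-2}$.

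The gap is in how you propose to prove $\alpha(\B_n)\le n!/2^{n/2-2}$. You set up the Delsarte/Lov\'asz-theta linear programming framework for normal Cayley graphs correctly, and you correctly identify what properties a witness class function $w$ would need: nonnegative Fourier coefficients, and nonpositive values on every permutation with at least two nontrivial cycles. But you never construct $w$; the paragraph beginning ``The main obstacle is (i)'' is a description of the difficulty, not a resolution of it. Without an explicit $w$ (or at least an existence argument for one with $w(e)/\widehat w(\mathrm{triv})\le n!/2^{n/2-2}$), this is not yet a proof.

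There is also a real risk that no such $w$ exists. The paper explicitly warns (end of Section~1) that Hoffman-type bounds give only $\alpha(\B_n)=O(n!)$, or $O((n-1)!)$ after restricting to a single sign class, because the extreme eigenvalues of $\B_n$ come from low-dimensional representations. Your framework is the full Delsarte LP rather than bare Hoffman, so it is in principle stronger, but you have given no evidence that the LP optimum is exponentially small in $n$. The paper's actual argument is \emph{not} a single LP witness: it is an inductive structure-versus-randomness dichotomy (Proposition~\ref{prop:main}). Either the action of $A$ on ordered $m$-tuples is noticeably non-uniform for some even $m$, in which case one passes to a large subset $A'$ that fixes $m$ points and inducts on $n-m$; or the action is near-uniform for all even $m$, in which case a Fourier computation over only the hook characters (via Murnaghan--Nakayama and the decomposition of the permutation modules $M^{h_k}$ into Specht modules) forces two elements of $A$ to differ by a full $n$-cycle. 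The induction is essential: it is precisely how the paper ``circumvents the effect of the low dimensional representations'' that obstruct a direct spectral bound. If you want to rescue your approach, you would need to exhibit $w$ explicitly and verify both conditions; absent that, the paper's inductive route is the one that is known to work.
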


As a side remark, we note that the study of graphs with nonzero circulations was instrumental in the recent construction of a deterministic
quasi-polynomial algorithm for perfect matching in NC~\cite{fenner2016bipartite}. However, beyond some superficial similarities,
the setup seems inherently different than ours. For starters, they study general bipartite graphs, while we study the complete graphs. Moreover, they need to handle certain families of cycles,
not necessarily simple,  while in this work we focus on simple cycles.

The proofs of Theorem~\ref{thm:lower_bound_dim} and Theorem~\ref{thm:lower_bound_dim2} rely on the study of
a certain Cayley graph of the permutation group, which encodes the property of simple cycle free labeling. Surprisingly, the 
corresponding graph is the Birkhoff polytope graph.

\subsection{The Birkhoff polytope graph}
Let $S_n$ denote the symmetric group of permutations on $[n]$.
A permutation $\tau \in S_n$ is said to be a \emph{cycle} if, except for its fixed points, it contains a single non-trivial cycle (in particular, the identity is not a cycle).
We denote by $\CC_n \subset S_n$ the set of cycles. The Cayley graph $\B_n=\cay(S_n, \CC_n)$ is a graph with vertex set $S_n$ and edge set $\{(\pi, \tau \pi): \pi \in S_n, \tau \in \CC_n\}$. Note that this graph is undirected, as if $\tau \in \CC_n$ then also $\tau^{-1} \in \CC_n$.

The graph $\B_n$ turns out to be widely studied: it is the graph of the Birkhoff polytope, which is the convex hull of all $n \times n$ permutation matrices.
See for example~\cite{billera1994combinatorics} for a proof. Our analysis does not use this connection; we use the description of the graph as a Cayley graph.

The following claim shows that Theorem~\ref{thm:lower_bound_dim2} reduces to bounding the size of the largest independent set in the Birkhoff polytope graph.

\begin{claim}
\label{claim:reduce}
Let $H$ be an Abelian group.
Assume that $\gamma:[n] \times [n] \to H$ is simple cycle free. Then $\B_n$ contains an independent set of size $\ge n! / |H|$.
\end{claim}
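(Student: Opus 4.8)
The plan is to turn the labeling $\gamma$ into a proper vertex coloring of $\B_n$ with at most $|H|$ colors; the independent set of size $\ge n!/|H|$ then falls out of the pigeonhole principle, since $\B_n$ has $n!$ vertices. First I would fix the standard dictionary between permutations and perfect matchings: to $\pi \in S_n$ associate the perfect matching $M_\pi = \{(i,\pi(i)) : i \in [n]\}$ in $K_{n,n}$, and define the coloring $f : S_n \to H$ by $f(\pi) = \sum_{i \in [n]} \gamma(i,\pi(i))$, i.e.\ the $\gamma$-sum over the edges of $M_\pi$.

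The heart of the argument is to show $f$ is a proper coloring of $\B_n$, that is, $f(\pi) \ne f(\tau\pi)$ for every edge $(\pi,\tau\pi)$ of $\B_n$, where $\tau \in \CC_n$. Write $\tau = (a_1\,a_2\,\cdots\,a_k)$ with $k \ge 2$, and set $b_j = \pi^{-1}(a_j)$. Then $M_\pi$ and $M_{\tau\pi}$ agree at every vertex $i \notin \{b_1,\dots,b_k\}$, while at $b_j$ the edge $(b_j,a_j) \in M_\pi$ is replaced by $(b_j,a_{j+1}) \in M_{\tau\pi}$ (indices mod $k$). Hence the symmetric difference $M_\pi \triangle M_{\tau\pi}$ is exactly the closed walk $b_1 - a_1 - b_k - a_k - b_{k-1} - \cdots - a_2 - b_1$, and because $\tau$ is a \emph{single} cycle this is one simple cycle $C$ in $K_{n,n}$, of length $2k \ge 4$, alternating between $M_\pi$-edges and $M_{\tau\pi}$-edges. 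Taking the alternating sign assignment that puts $+1$ on the $M_\pi$-edges of $C$ and $-1$ on the $M_{\tau\pi}$-edges, every edge outside $M_\pi \triangle M_{\tau\pi}$ contributes $0$ to $f(\pi)-f(\tau\pi)$, so
\[
f(\pi) - f(\tau\pi) \;=\; \sum_{j=1}^{k}\gamma(b_j,a_j) - \sum_{j=1}^{k}\gamma(b_j,a_{j+1}) \;=\; \sum_{e \in C}\sign(e)\,\gamma(e),
\]
which is nonzero since $\gamma$ is simple cycle free. Therefore $f(\pi)\ne f(\tau\pi)$. (In the $\F_2^d$ special case signs are irrelevant and the same identity reads $f(\pi)+f(\tau\pi)=\sum_{e\in C}\gamma(e)\ne 0$.)

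To finish: $f$ is a proper coloring of $\B_n$ using at most $|H|$ colors, and $|S_n| = n!$, so by pigeonhole some color class has size $\ge n!/|H|$; each color class is an independent set, proving the claim.

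The only delicate point — and the step I'd expect to take the most care — is the combinatorial bookkeeping in the middle paragraph: verifying that $M_\pi \triangle M_{\tau\pi}$ really is a single \emph{simple} cycle (this is precisely where the hypothesis $\tau \in \CC_n$ rather than an arbitrary permutation is used; a general $\tau$ would split the symmetric difference into several cycles) and checking that the alternating signs are oriented so that the circulation equals $f(\pi)-f(\tau\pi)$ up to an overall sign, which is all that matters for the nonvanishing conclusion.
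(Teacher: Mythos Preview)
Your proof is correct and is essentially the same argument as the paper's: both define the map $f(\pi)=\sum_i \gamma(i,\pi(i))$, pigeonhole into a large fiber, and show that two permutations in the same fiber differing by a single cycle would produce a simple cycle $C$ in $K_{n,n}$ with vanishing alternating $\gamma$-sum. The only cosmetic difference is that you phrase this as ``$f$ is a proper coloring of $\B_n$'' and spell out the cycle $C=M_\pi\triangle M_{\tau\pi}$ explicitly, whereas the paper states the same contradiction more tersely.
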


\begin{proof}
Define
$$
A = \left\{\pi \in S_n: \sum_{i=1}^n \gamma(i,\pi(i))=h\right\},
$$
where $h \in H$ is chosen to maximize the size of $A$. Thus $|A| \ge n!/|H|$. We claim that $A$ is an independent set in $\B_n$.

Assume not. Then there are two permutations $\pi,\pi' \in A$ such that $\tau = \pi (\pi')^{-1} \in \CC_n$. Let $M_{\pi}=\{(i, \pi(i)): i \in [n]\}$ denote the matching
in $K_{n,n}$ associated with $\pi$, and define $M_{\pi'}$ analogously. Let $C = M_{\pi} \oplus M_{\pi'}$ denote their symmetric difference. The fact that $\tau \in \CC_n$ has
exactly one cycle, is equivalent to $C$ being a simple cycle. Let $\sign(\cdot)$ be an alternating sign assignment to the edges of $C$. Then
$$
\sum_{e \in C} \sign(e) \gamma(e) = \sum_{e \in M_{\pi}} \gamma(e) - \sum_{e \in M_{\pi'}} \gamma(e) = h-h=0.
$$
This violates the assumption that $\gamma$ is simple cycle free.
\end{proof}

The construction of a simple cycle free labeling in Theorem~\ref{thm:construct}, combined with Claim~\ref{claim:reduce}, implies that the Birkhoff polytope graph
contains a large independent set.
\begin{corollary}\label{cor:lower_bound_indep}
Let $n$ be a power of two. Then $\B_n$ contains an independent set of size $\ge n!/9^n$.
\end{corollary}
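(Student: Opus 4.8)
The plan is to read off Corollary~\ref{cor:lower_bound_indep} by feeding the explicit construction of Theorem~\ref{thm:construct} into the reduction of Claim~\ref{claim:reduce}. The group $\F_2^{d}$ is a finite Abelian group of order $2^{d}$, so Claim~\ref{claim:reduce} applies to it verbatim once we check that the two notions of ``simple cycle free'' agree on $\F_2^d$. This is immediate: over $\F_2$ every sign $\sign(e)\in\{-1,1\}$ acts as the identity, so $\sum_{e\in C}\sign(e)\gamma(e)=\sum_{e\in C}\gamma(e)$ for every cycle $C$, and hence an $\F_2^d$-valued labeling is simple cycle free in the sense of the Definition precisely when it is simple cycle free in the Abelian-group sense used in the Claim.

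Next I would invoke Theorem~\ref{thm:construct}: since $n$ is a power of two, there is a simple cycle free labeling $\gamma:[n]\times[n]\to\F_2^{d}$ with $d=3n$. Applying Claim~\ref{claim:reduce} with $H=\F_2^{3n}$, whose order is $2^{3n}=8^{n}$, produces an independent set $A\subseteq S_n$ in $\B_n$ with $|A|\ge n!/8^{n}$. Finally, $8^{n}\le 9^{n}$, so $|A|\ge n!/9^{n}$, which is the claimed bound. The slightly lossy passage from $8^{n}$ to $9^{n}$ costs nothing here; it simply leaves room for the lower-order adjustments needed when $n$ is not a power of two, which are not required for this corollary.

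I do not expect any real obstacle: all the substantive work has already been done in Theorem~\ref{thm:construct} (the recursive construction) and in Claim~\ref{claim:reduce} (the translation from labelings to independent sets via the level sets $\{\pi:\sum_{i}\gamma(i,\pi(i))=h\}$ together with an averaging argument over $h\in H$). The only point deserving an explicit line of justification is the compatibility of the signed and unsigned definitions of simple-cycle-freeness over $\F_2$ noted above; everything else is arithmetic.
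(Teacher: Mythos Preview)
Your proposal is correct and matches the paper's approach exactly: the corollary is deduced immediately from Theorem~\ref{thm:construct} and Claim~\ref{claim:reduce}, and your observation that the signed and unsigned cycle-sum conditions coincide over $\F_2^d$ is the only small detail needed to bridge the two statements. Your argument in fact yields the slightly stronger bound $n!/8^n$, of which $n!/9^n$ is an immediate consequence.
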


We also give in the appendix a construction of a larger independent set in the Birkhoff polytope graph, not based on a simple cycle free labeling.
\begin{theorem}\label{thm:better_example}
Let $n$ be a power of two. Then $\B_n$ contains an independent set of size $\ge n!/4^n$.
\end{theorem}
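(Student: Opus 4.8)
The plan is to bound the chromatic number $\chi(\B_n)$ from above by $4^n$ and then use $\alpha(\B_n) \ge |V(\B_n)|/\chi(\B_n) = n!/\chi(\B_n)$. The whole argument rests on a recursive decomposition of $\B_n$ into copies of the Cartesian product $\B_{n/2}\,\square\,\B_{n/2}$.

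First I would set up the decomposition. Fix an equipartition $[n] = U_1 \sqcup U_2$ with $|U_1| = |U_2| = n/2$. For each $(n/2)$-subset $S \subseteq [n]$ let $C_S = \{\pi \in S_n : \pi^{-1}(U_1) = S\}$; these $\binom{n}{n/2}$ sets partition $V(\B_n)=S_n$. The key structural claim is that the subgraph of $\B_n$ induced on $C_S$ is isomorphic to $\B_{n/2}\,\square\,\B_{n/2}$. Every $\pi \in C_S$ maps $S$ bijectively onto $U_1$ and $S^c$ onto $U_2$, so after fixing identifications $S \cong [n/2] \cong U_1$ and $S^c \cong [n/2]\cong U_2$ the map $\pi \mapsto (\pi|_S, \pi|_{S^c})$ is a bijection $C_S \to S_{n/2}\times S_{n/2}$, say $\pi \leftrightarrow (\sigma_1,\sigma_2)$. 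For $\pi,\pi'\in C_S$ the permutation $\tau = \pi(\pi')^{-1}$ satisfies $\tau(U_1)=\pi(S)=U_1$, hence $\tau$ splits as $\tau|_{U_1}\sqcup\tau|_{U_2}$ with $\tau|_{U_i}$ corresponding to $\sigma_i(\sigma_i')^{-1}$ in coordinates. Since the number of nontrivial cycles of $\tau$ is the sum of those of its two parts, $\tau \in \CC_n$ iff exactly one of $\tau|_{U_1},\tau|_{U_2}$ lies in $\CC_{n/2}$ and the other is the identity; translating through the identifications, this is exactly the adjacency rule of $\B_{n/2}\,\square\,\B_{n/2}$.

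Given the decomposition, color each $\pi$ by the pair $\big(\pi^{-1}(U_1),\, c_{\pi^{-1}(U_1)}(\pi)\big)$, where for each $S$ we fix an optimal proper coloring $c_S$ of the induced subgraph on $C_S$. Two permutations with the same color lie in the same $C_S$ and are non-adjacent within it, hence non-adjacent in $\B_n$; so this is a proper coloring using at most $\binom{n}{n/2}\cdot\max_S\chi(C_S)$ colors (edges of $\B_n$ between distinct $C_S$'s are harmless, since such permutations already receive distinct colors). Now invoke the classical identity $\chi(G\,\square\,H)=\max(\chi(G),\chi(H))$ to get $\chi(C_S)=\chi(\B_{n/2})$, so
\[
\chi(\B_n) \le \binom{n}{n/2}\,\chi(\B_{n/2}).
\]
Iterating for $n$ a power of two with base case $\chi(\B_1)=1$, and using $\binom{2m}{m}\le 4^m$, yields $\chi(\B_n)\le \prod_{j=1}^{\log_2 n}\binom{2^j}{2^{j-1}} \le \prod_{j\ge 1} 4^{2^{j-1}} = 4^{\,n-1}<4^n$. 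Hence $\B_n$ has an independent set of size $\ge n!/\chi(\B_n) > n!/4^n$.

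I expect the only real content to be the structural lemma identifying the induced subgraph on $C_S$ with $\B_{n/2}\,\square\,\B_{n/2}$ — in particular keeping the three identifications ($S$, $U_1$, $[n/2]$) straight and checking that membership in $\CC_n$ of $\pi(\pi')^{-1}$ decouples correctly over $U_1,U_2$ — together with correctly applying $\chi(G\,\square\,H)=\max(\chi G,\chi H)$. The telescoping and the estimate $\binom{2m}{m}\le 4^m$ are routine. It is worth noting that this construction is of a completely different flavor from the labeling-based one: it never produces or uses a simple cycle free labeling, only the product structure of the Birkhoff polytope graph.
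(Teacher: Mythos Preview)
Your proof is correct and takes a genuinely different route from the paper's. The paper builds an explicit independent set $A$ by imposing, at each level $i$ of the dyadic decomposition $T_{i,j}$, a modular constraint $\sum_j \ind_i(\pi(T_{i-1,j}),\pi(T_{i,2j-1}))\equiv 0\pmod{M_i}$ with $M_i=\binom{2^{m-i+1}}{2^{m-i}}$, and then verifies independence directly by an induction showing that any cycle $\pi(\pi')^{-1}$ with $\pi,\pi'\in A$ would have to be confined to a single block at every level, hence eventually to a single point. You instead bound the chromatic number: the key observation that each coset $C_S$ induces exactly $\B_{n/2}\,\square\,\B_{n/2}$, together with the classical identity $\chi(G\,\square\,H)=\max(\chi(G),\chi(H))$, immediately yields the recursion $\chi(\B_n)\le\binom{n}{n/2}\chi(\B_{n/2})$.

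The two arguments are closely related under the hood: unwinding the Sabidussi coloring $(\sigma_1,\sigma_2)\mapsto c(\sigma_1)+c(\sigma_2)\bmod k$ through the recursion reproduces precisely the paper's modular-sum constraints, and both yield the identical numerical bound $\alpha(\B_n)\ge n!/\prod_{j=1}^{\log_2 n}\binom{2^j}{2^{j-1}}$. Your formulation is cleaner and more conceptual, and it delivers the extra information $\chi(\B_n)\le 4^{n-1}$; the paper's version has the advantage of exhibiting the independent set explicitly without invoking an auxiliary graph-theoretic theorem. One small typo: your displayed product $\prod_{j\ge 1}4^{2^{j-1}}$ should run only to $j=\log_2 n$, though your conclusion $4^{n-1}$ is the correct value of that finite product.
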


The best previous bounds we are aware of are by Onn~\cite{onn1993geometry} who proved that $\B_n$ contains an independent set of size $\ge n^{\Omega(\sqrt{n})}$.

Our main technical result is an upper bound on the largest size of an independent set in the Birkhoff polytope graph.

\begin{theorem}\label{thm:lower_bound_perm}
The largest independent set in $\B_n$ has size $\le n! / 2^{(n-4)/2}$.
\end{theorem}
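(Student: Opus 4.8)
The plan is to treat $\B_n$ as a normal Cayley graph and apply the Hoffman (ratio) bound. Since $\CC_n$ is a union of conjugacy classes — the class of $k$-cycles, of size $\binom nk(k-1)!$, for $k=2,\dots,n$ — the adjacency operator acts on the $\rho$-isotypic subspace as multiplication by
\[
\theta_\rho \;=\; \frac{1}{\dim\rho}\sum_{k=2}^{n}\binom{n}{k}(k-1)!\,\chi_\rho(c_k),
\]
where $c_k$ is a $k$-cycle and $\rho$ ranges over the irreducible representations of $S_n$; the trivial representation yields the degree $d=\sum_k\binom nk(k-1)!$. The ratio bound then gives $\alpha(\B_n)\le n!\cdot(-\theta_{\min})/(d-\theta_{\min})$, so (up to the additive slack in the exponent) it suffices to prove $-\theta_\rho \le d\cdot 2^{-(n-4)/2}$ for every nontrivial $\rho$.

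To bound $\theta_\rho$ I would split the cycle sum at $k=n/2$. The short cycles are easy: $\sum_{k\le n/2}\binom nk(k-1)!$ is smaller than $d$ by a factorial factor, so even the crude estimate $|\chi_\rho(c_k)|\le\dim\rho$ makes their total contribution far below the target. For the long cycles I would invoke the Murnaghan--Nakayama rule: $\chi_\rho(c_k)$ is a signed sum over rim hooks of $\rho$ of size $k$, each summand being $\pm f^{\mu}$ for a partition $\mu\vdash n-k$; since $n-k<n/2$ the residual partitions are small ($\sum_{\mu\vdash n-k}f^{\mu}\le\sqrt{(n-k)!\,p(n-k)}$) and a rim hook of size $k$ can be removed from $\rho$ in only a bounded number of ways, so $|\chi_\rho(c_k)|$ is at most $\dim\rho$ times a quantity that is tiny compared with $\binom nk(k-1)!/d$. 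Summing these estimates controls $-\theta_\rho$ for every $\rho$ of large enough dimension.

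The step I expect to be the real obstacle is the low-dimensional representations, and above all the sign representation: there $\chi_{\mathrm{sgn}}(c_k)=(-1)^{k-1}$, so $\theta_{\mathrm{sgn}}=\sum_k(-1)^{k-1}\binom nk(k-1)!$, which for even $n$ fails to telescope and is a negative constant multiple of $d$, so the ratio bound applied verbatim only gives $\alpha(\B_n)\le(1-\Omega(1))\,n!$. To beat this I would bring in a combinatorial structural fact about $\B_n$: for any partition of $[n]$ into $n/2$ pairs, the subgraph of $\B_n$ induced on a fibre of the corresponding quotient (permutations with prescribed \emph{unordered} image on each pair) is exactly the hypercube $Q_{n/2}$, since two such permutations differ by a product of the transpositions of exactly the pairs on which they disagree; moreover the bipartition of $Q_{n/2}$ agrees, fibre by fibre, with the even/odd splitting of $S_n$ that the sign representation detects. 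One uses this — after possibly replacing $A$ by its image under a fixed transposition, which preserves independence — to pass to independent sets contained in the even permutations under the odd-length cycles, a setting in which the sign representation has been quotiented away, and then re-runs the Murnaghan--Nakayama estimates there, combining them with the hypercube bound on the pair-partition fibres to squeeze the ambient $n!$ down to $n!/2^{(n-4)/2}$. The crux — and the place where the sharp character estimates for long cycles are essential — is making the spectral losses and the hypercube losses \emph{compound} rather than telescope into a single factor of $2$.
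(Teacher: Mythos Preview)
Your approach has a genuine gap, and in fact the paper explicitly flags it: a direct application of the Hoffman bound to $\B_n$ gives only $\alpha(\B_n)=O(n!)$, and restricting to a single sign class improves this only to $O((n-1)!)$, not to $n!/2^{\Omega(n)}$. The obstruction is not merely the sign representation. Every hook $h_m=(n-m,1,\ldots,1)$ with $m$ small has dimension $\binom{n-1}{m}$ and, by Murnaghan--Nakayama, satisfies $\chi^{h_m}((n))=(-1)^m$; the resulting eigenvalue $\theta_{h_m}$ is negative with $|\theta_{h_m}|/d$ of order $1/\binom{n-1}{m}$, which is only polynomially small. So even after quotienting out the sign, the representation $h_1$ (dimension $n-1$) already caps the ratio bound at roughly $n!/n$, and no amount of ``sharp character estimates for long cycles'' can repair this, because the issue is the denominator $\dim\rho$, not the numerator $\chi_\rho(c_k)$.

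Your hypercube observation is correct --- a pair-partition fibre does induce $Q_{n/2}$ --- but as you yourself note, $\alpha(Q_{n/2})=2^{n/2-1}$, so this buys exactly one factor of $2$, which is the sign representation again in disguise. The proposed ``compounding'' of spectral and hypercube losses is the crux and is left as a slogan; there is no mechanism offered for iterating the reduction, and indeed none is available in the pure Hoffman framework, because the fibres for different pair-partitions overlap heavily and the spectral bound on the quotient does not improve. The paper's proof takes a genuinely different route: a structure-versus-randomness dichotomy. Either the action of $A$ on $m$-tuples is noticeably non-uniform for some even $m$ --- in which case one passes to a large sub-independent-set in $\B_{n-m}$ and inducts --- or it is nearly uniform for all $m$, which forces $\chi^{h_m}(\varphi)$ to be small for every $m$ (via Kostka numbers bounding the permutation module $M^{h_k}$), and then the inner product $\langle\varphi,\psi\rangle=\sum_m(-1)^m\chi^{h_m}(\varphi)$ is dominated by the trivial term and is positive, yielding two elements of $A$ differing by an $n$-cycle. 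The induction is what makes the factors compound; the Hoffman bound alone cannot.
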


As a side remark, we note that general bounds on the independence number of graphs, such as the Hoffman bound, give much weaker bounds.
A standard application of the Hoffman bound gives a much weaker bound for the independence number of $\B_n$ of $O(n!)$;
and if we restrict all permutations to have the same sign, the bound improves to $O((n-1)!)$. The reason is that 
the Hoffman bounds (at least in its simplest form) directly relates to the minimal eigenvalues of the graph. However,
in our case the eigenvalues are controlled by the irreducible representations of $S_n$, and the extreme eigenvalues
are given by low dimensional representations. This prohibits obtaining strong bounds on the independence number directly.

In order to overcome this barrier, our analysis circumvents the effect of the low dimensional representations by appealing to a
structure vs. randomness dichotomy specialized for our setting. It allows us to either reduce the dimension of the ambient
group, or restrict to pseudo-random assumptions about the actions of the low dimensional representations.

\paragraph{Organization.}
We prove Theorem~\ref{thm:construct} in Section~\ref{sec:lower} and
Theorem~\ref{thm:lower_bound_perm} in Section~\ref{sec:upper}. Theorem~\ref{thm:better_example} is proved in Appendix~\ref{sec:better_example}.

\paragraph{Acknowledgements.}
We thank Ran Gelles and Sergey Yekhanin for useful discussions on the problem and comments on a preliminary version of this paper.
We thank Igor Pak for bringing to our attention that the Cayley graph which we study is the Birkhoff polytope graph.

\section{A construction of a simple cycle free labeling}
\label{sec:lower}

We prove Theorem~\ref{thm:construct} in this section. We first introduce some notation. For $x \in [n]$
denote by $e^n_{x} \in \F_2^n$ the unit vector with $1$ in coordinate $x$ and $0$ in all other coordinates. We let $0^n \in \F_2^n$ denote
the all zero vector.

Let $n$ be a power of two.  We define recursively a labeling $\gamma_n:[n] \times [n] \to \F_2^{3n}$. For $n=2$ set (for example)
$$
\gamma_2(0,0)=e_1^6, \gamma_2(0,1)=e_2^6, \gamma_2(1,0)=e_3^6, \gamma_2(1,1)=e_4^6.
$$
Assume $n>2$. Let $x' = x \mod (n/2)$ and $y' = y \mod (n/2)$, where $x',y' \in [n/2]$. Define $\gamma_n(x,y) \in \F_2^{3n}$ recursively as
\begin{enumerate}
\item[(i)] The first $n$ bits of $\gamma_n(x,y)$ are $e^n_x$ if $y \le n/2$, and otherwise they are $0^n$.
\item[(ii)] The next $n/2$ bits of $\gamma_n(x,y)$ are $e^{n/2}_{y'}$ if $x \le n/2$, and otherwise they are $0^{n/2}$.
\item[(iii)] The last $3n/2$ bits of $\gamma_n(x,y)$ are defined recursively to be $\gamma_{n/2}(x',y')$.
\end{enumerate}

We claim that $\gamma_n$ is indeed simple cycle free. For $n=2$ it is simple to verify this directly, so assume $n>2$.

Let $C$ be a simple cycle in $K_{n,n}$, and assume towards a contradiction that $\sum_{e \in C} \gamma_n(e)=0$. Assume $C$ has $2k$ nodes, for some $2 \le k \le n$,
and let these be $C=(x_1,y_1,x_2,y_2,\ldots,x_k,y_k,x_1)$. We denote $X=\{x_1,\ldots,x_k\}$ and $Y=\{y_1,\ldots,y_k\}$. Define furthermore $L=\{1,\ldots,n/2\}$ and $U=\{n/2+1,\ldots,n\}$.

\begin{claim}\label{claim:Y_subset}
Either $Y \subset L$ or $Y \subset U$.
\end{claim}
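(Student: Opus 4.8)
The plan is to analyze how the first block of $n$ bits of $\gamma_n$ behaves along the cycle $C$, and show that this block alone forces $Y$ to lie entirely in $L$ or entirely in $U$. Recall that by construction (i), the first $n$ coordinates of $\gamma_n(x,y)$ equal $e^n_x$ when $y \le n/2$ (i.e. $y \in L$) and equal $0^n$ when $y \in U$. Since $\sum_{e \in C}\gamma_n(e)=0$, in particular the first $n$ coordinates sum to zero. So I would consider the multiset of edges $e=(x,y)$ of $C$ with $y \in L$; their contribution to the first block is $\sum e^n_x$ over those edges, and this must be the zero vector in $\F_2^n$.

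The key combinatorial observation is that $C$ is a simple cycle in $K_{n,n}$, so each vertex $x_i$ on the $X$-side has exactly two incident edges in $C$, namely $(x_i, y_{i-1})$ and $(x_i, y_i)$ (indices mod $k$). For the first block to vanish, each $x \in X$ must be ``hit'' an even number of times among the edges with $y$-endpoint in $L$ — but since $x$ has only two incident edges in $C$, this means that for each $x_i$, either \emph{both} of its neighbors $y_{i-1}, y_i$ lie in $L$, or \emph{neither} does (i.e. both lie in $U$). Now I would propagate this around the cycle: walking along $C$ as $y_1, y_2, \ldots, y_k, y_1$, consecutive $y$'s are joined through a common $x$-vertex, and the dichotomy just established says consecutive $y$'s are always on the same side ($L$ or $U$). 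By connectivity of the cycle, all $y_i$ lie on the same side, which is exactly the claim.

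The step I expect to be the main (though still routine) obstacle is making the ``even multiplicity'' argument fully rigorous in $\F_2^n$: I need that $\sum_{i} e^n_{x_i} = 0$ over a sub-multiset of edges forces, for each value $x$, that the number of selected edges with that $x$-coordinate is even, and then combine this with the degree-2 structure to pin down the two-neighbor dichotomy. One subtlety is that the $x_i$ are distinct (since $C$ is simple), so each $x$ appears in exactly two edges of $C$; thus the count of selected edges at $x$ is $0$, $1$, or $2$, and evenness rules out $1$. Once this is in place the propagation is immediate, so I would state the dichotomy as a short sublemma and then conclude Claim~\ref{claim:Y_subset} in a sentence.
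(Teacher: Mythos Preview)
Your proof is correct and uses essentially the same idea as the paper: both exploit the first $n$ coordinates of $\gamma_n$ and the fact that the $x_i$ are distinct, so that any $x$-vertex with one $y$-neighbor in $L$ and the other in $U$ would leave a lone $e^n_{x}$ in the sum. The only cosmetic difference is that the paper argues by contradiction---picking a single transition $y_i\in L$, $y_{i+1}\in U$ and exhibiting the offending coordinate---while you phrase the same observation as a parity condition at every $x_i$ and then propagate around the cycle.
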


\begin{proof}
Assume that both $Y \cap L$ and $Y \cap U$ are nonempty. Then there must exist $i \in [k]$ with $y_i \in L$ and
$y_{i+1} \in U$, where if $i=k$ then we take the subscript modulo $k$. Recall that $x_{i+1}$ is the neighbour of $y_i,y_{i+1}$ in $C$. Its contribution
to the first $n$ bits of the sum is $e_{x_{i+1}}^n$, since $y_i \le n/2$ and $y_{i+1}>n/2$. Note that no other edge in $C$ has a nonzero value in coordinate $x_{i+1}$.
Thus the $x_{i+1}$ coordinate in the sum over $C$ is $1$, which contradicts the assumption that the sum over $C$ is zero.
\end{proof}

Thus we can assume from now on that either $Y \subset L$ or $Y \subset U$.

\begin{claim}
Either $X \subset L$ or $X \subset U$.
\end{claim}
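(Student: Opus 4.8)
The plan is to mirror the argument of Claim~\ref{claim:Y_subset}, but now reading off the \emph{second} block of $n/2$ coordinates (item (ii) of the construction) instead of the first, and to use the conclusion of Claim~\ref{claim:Y_subset} to guarantee that no spurious edges interfere with the parity bookkeeping.

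First I would record the one consequence of Claim~\ref{claim:Y_subset} that makes everything work: since $Y$ is entirely contained in $L$ or entirely in $U$, and the $y_j$ are pairwise distinct (as $C$ is simple), the reduced values $y_j' = y_j \bmod (n/2)$ are also pairwise distinct; in particular no ``wrap-around'' collision $y_j = y_i \pm n/2$ occurs. Now suppose for contradiction that both $X \cap L$ and $X \cap U$ are nonempty. Walking around the cycle $C = (x_1,y_1,\ldots,x_k,y_k,x_1)$ (indices mod $k$), there must be an index $i$ with $x_i \in L$ and $x_{i+1} \in U$. The vertex $y_i$ is the common neighbour of $x_i$ and $x_{i+1}$ in $C$, so the two edges of $C$ incident to $y_i$ are exactly $(x_i,y_i)$ and $(x_{i+1},y_i)$.

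I would then examine coordinate $y_i'$ inside the block-(ii) part of $\sum_{e\in C}\gamma_n(e)$. By construction, the block-(ii) label of an edge $(x,y)$ is $e^{n/2}_{y'}$ if $x \le n/2$ and $0^{n/2}$ otherwise; hence an edge of $C$ can be nonzero in coordinate $y_i'$ only if its $Y$-side endpoint $y$ satisfies $y' = y_i'$, which by the distinctness above forces $y = y_i$. So only $(x_i,y_i)$ and $(x_{i+1},y_i)$ contribute to this coordinate: the former contributes $1$ since $x_i \le n/2$, the latter contributes $0$ since $x_{i+1} > n/2$. Thus coordinate $y_i'$ of the sum over $C$ equals $1$, contradicting $\sum_{e\in C}\gamma_n(e)=0$.

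I do not anticipate a real obstacle here: the argument is essentially symmetric to Claim~\ref{claim:Y_subset}, and the only genuinely new ingredient---the reason the claims must be proved in this order---is that one must already know $Y$ does not straddle $L$ and $U$, so that the parity count in block (ii) is not polluted by a collision $y_j = y_i \pm n/2$. Note that block (iii) (the recursive part) plays no role in this particular step.
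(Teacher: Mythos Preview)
Your proof is correct and follows essentially the same approach as the paper's: both arguments find an index $i$ with $x_i\in L$, $x_{i+1}\in U$, observe that the two edges at $y_i$ together contribute exactly $e^{n/2}_{y_i'}$ to block~(ii), and invoke the conclusion of Claim~\ref{claim:Y_subset} to rule out any other edge touching coordinate $y_i'$. Your write-up is in fact slightly more explicit than the paper's in isolating the key consequence of Claim~\ref{claim:Y_subset} (injectivity of $y\mapsto y'$ on $Y$), which is precisely the ``no collision $y_j=y_i\pm n/2$'' point the paper alludes to when it says ``here we need the assumption that $Y\subset L$ or $Y\subset U$.''
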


\begin{proof}
Assume that $Y \subset L$, and the case of $Y \subset U$ is identical.
Assume that both $X \cap L$ and $X \cap U$ are both nonempty. Then there must exist $i \in [k]$ with $x_i \in L$ and
$x_{i+1} \in U$. Recall that $y_{i}$ is the neighbour of $x_i,x_{i+1}$ in $C$. Its contribution
to the 2nd batch (of $n/2$ bits) of the sum is $e_{y'_{i}}^{n/2}$, since $x_i \le n/2$ and $x_{i+1}>n/2$.
Note that no other edge in $C$ has a nonzero value in coordinate $n+y'_{i}$, where we here we need the assumption that $Y \subset L$ or
$Y \subset U$.
Thus the $n+y'_{i}$ coordinate in the sum over $C$ is $1$, which contradicts the assumption that the sum over $C$ is zero.
\end{proof}

Thus we have that $X \subset U$ or $X \subset L$, and similarly $Y \subset U$ or $Y \subset L$.
Thus, $C$ is a simple cycle in $K_{n/2,n/2}$ embedded in $K_{n,n}$ in one of four disjoint ways: $L \times L$, $L \times U$, $U \times L$ or $U \times U$.
Observe that in each of these copies, the last $3n/2$ coordinates of the sum are precisely $\gamma_{n/2}$, so by induction $C$ cannot have zero sum.

\section{The independence number of the Birkhoff polytope graph}
\label{sec:upper}

We prove Theorem~\ref{thm:lower_bound_perm} in this section.
Let $A$ be an independent set in $\B_n$. We prove an upper bound on the size of $A$. Concretely, we will show that $|A| \le \frac{a}{c^n} n!$ for some absolute
constants $a,c>1$. As we will see at the end, the choice of $a=4, c=\sqrt{2}$ works.

The proof relies on representation theory, in particular representation theory of the symmetric group. We
refer readers to the excellent book of Sagan~\cite{sagan2013symmetric}, which provides a thorough introduction to the topic. We will try to adhere to the notations
in that book whenever possible.

\paragraph{Overall Strategy.} Our basic plan will be to break our analysis into two cases based on whether or not the action of $A$ on $m$-tuples is nearly uniform for all $m$. This will be in analogy with standard structure vs. randomness arguments. If the action on $m$-tuples is highly non-uniform, this will allow us to take advantage of this non-uniformity to reduce to a lower-dimensional case. On the other hand, if $A$ acts nearly uniformly on $m$-tuples, this suggests that it behaves somewhat randomly. This intuition can be cashed out usefully by considering the Fourier-analytic considerations of this condition, which will allow us to prove that some pair of elements of $A$ differ by a simple cycle using Fourier analysis on $S_n$.

\paragraph{Non-Uniform Action on Tuples.}
Let $[n]_m=\{(i_1,\ldots,i_m): i_1,\ldots,i_m \in [n] \text{ distinct}\}$ denote the family of ordered $m$-tuples of distinct elements of $[n]$.
Its size is $(n)_m = n(n-1)\cdots(n-m+1)$. A permutation $\pi \in S_n$ acts on $[n]_m$ by sending $I=(i_1,\ldots,i_m)$ to $\pi(I)=(\pi(i_1),\ldots,\pi(i_m))$.
Below when we write $\Pr_{\pi \in A}[\cdot]$ we always mean the probability of an event under a uniform choice of $\pi \in A$.

Notice that if $\Pr_{\pi \in A}[\pi(I)=J] \ge c^m / (n)_m$ for some pair $I,J\in [n]_m$, this will allow us to reduce to a lower dimensional version of the problem. In particular, if we let $A'=\{\pi\in A: \pi(I)=J\}$, we note that $|A| \leq |A'|(n)_m / c^m$. On the other hand, after multiplying on the left and right by appropriate permutations (an operation which doesn't impact our final problem), we can assume that $I=J=\{n-m+1,\ldots,n\}$. Then, if $A$ were an independent set for $\B_n$, $A'$ would correspond to an independent set for $\cay(S_{n-m}, \CC_{n-m})$. Then, if we could prove the bound that $|A'| \le \frac{a}{c^{n-m}} (n-m)!$, we could inductively prove that $|A| \le \frac{a}{c^n} n!$.

\paragraph{Uniform Action on Tuples.}
When the action of $A$ on $m$-tuples is near uniform for all $m$, we will attempt to show that two elements of $A$ differ by a simple cycle using techniques from the Fourier analysis of $S_n$. In fact, we will show the stronger statement that some pair of elements of $A$ differ by a single cycle of length $n$.

Some slight complications arise here when parity of the permutations here is considered. In particular, all $n$-cycles have the same parity. This is actually a problem for $n$ even, as all such cycles will be odd, and thus our statement will fail if $A$ consists only of permutations with the same parity. Thus, we will have to consider our statement only in the case of $n$ odd. Even in this case though, parity will still be relevant. In particular, note that the difference between two permutations in $A$ can be a cycle of length $n$ only if the initial permutations had the same parity. Thus, we lose very little by restricting our attention to only elements of $A$ with the more common parity. This will lose us a factor of $2$ in the size of $A$, but will make our analysis somewhat easier. We are now prepared to state our main technical proposition:

\begin{proposition}\label{prop:main}
Let $n$ be an odd integer and let $c>1$ be a sufficiently small constant. Let $A\subset S_n$ be a set of permutations satisfying:
\begin{enumerate}
\item[(i)] All elements of $A$ are of the same sign.
\item[(ii)]  For any even $m<n$ and any $I,J\in [n]_m$, $\Pr_{\pi \in A}[\pi(I)=J]<\frac{c^m}{(n)_m}$.
\end{enumerate}
Then there exist two elements of $A$ that differ by a cycle of length $n$.
In particular, we can take $c=\sqrt{2}$.
\end{proposition}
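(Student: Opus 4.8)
The plan is to prove Proposition~\ref{prop:main} via Fourier analysis on $S_n$. The strategy is to count, using the characters/representations of $S_n$, the number of ordered pairs $(\pi,\pi') \in A \times A$ such that $\pi (\pi')^{-1}$ is an $n$-cycle, and show this count is strictly positive (in fact, close to what one expects if $A$ were random) under hypotheses (i) and (ii). Concretely, let $f = \mathbf{1}_A$ viewed as a function on $S_n$, and let $g = \mathbf{1}_{\mathcal{N}}$ be the indicator of the conjugacy class $\mathcal{N}$ of $n$-cycles. The number of such pairs is $\langle f * g, f\rangle$ up to normalization, which by Schur orthogonality expands as a sum over irreducible representations $\lambda \vdash n$ of terms like $\frac{d_\lambda}{|S_n|} \widehat{g}(\lambda) \cdot \|\widehat{f}(\lambda)\|_F^2 \cdot (\text{something})$, where $\widehat{g}(\lambda)$ is a scalar because $g$ is a class function (it equals $\frac{|\mathcal{N}|}{d_\lambda}\chi_\lambda(\mathcal{N})$). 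So the count becomes $\frac{|A|^2 |\mathcal{N}|}{n!} + \frac{|\mathcal{N}|}{n!}\sum_{\lambda \ne \text{triv}} \frac{\chi_\lambda(\mathcal{N})}{d_\lambda} \|\widehat{f}(\lambda)\|_F^2$. The first term is the "main term"; I want to show the error sum is smaller than it.

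**Controlling the error term** has two ingredients. First, the character ratio $\chi_\lambda(\mathcal{N})/d_\lambda$ for $n$-cycles is extremely well-understood: it is $0$ unless $\lambda$ is a hook $(n-j, 1^j)$, in which case $\chi_\lambda(\mathcal{N}) = (-1)^j$ while $d_\lambda = \binom{n-1}{j}$, so $|\chi_\lambda(\mathcal{N})/d_\lambda| = 1/\binom{n-1}{j}$, which is tiny except for the trivial and sign representations ($j=0$ and $j=n-1$). The $j=0$ term is the main term; the $j=n-1$ (sign) term contributes $\pm \frac{|\mathcal{N}|}{n!}\|\widehat{f}(\text{sgn})\|^2$, and since all of $A$ has the same sign by (i), $\widehat{f}(\text{sgn})$ has modulus exactly $|A|$, so this sign term is $\pm \frac{|A|^2 |\mathcal{N}|}{n!}$ — it either doubles the main term or, catastrophically, could cancel it. This is precisely why $n$ must be odd: for $n$ odd all $n$-cycles are even permutations, so $\chi_{\text{sgn}}(\mathcal{N}) = +1$, and this term has the same sign as the main term, hence helps rather than hurts. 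The remaining hook representations with $1 \le j \le n-2$ contribute at most $\frac{|\mathcal{N}|}{n!}\sum_j \frac{1}{\binom{n-1}{j}}\|\widehat{f}(\lambda_j)\|_F^2$, which I need to bound by something like $\frac{|A|^2 |\mathcal{N}|}{n!}$.

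**Bounding $\|\widehat{f}(\lambda)\|_F^2$ for low-dimensional (hook) $\lambda$ using hypothesis (ii)** is the key remaining step, and I expect it to be the main obstacle. The idea is that the hook representations for small $j$ sit inside the permutation action of $S_n$ on ordered $j$-tuples (or $(j+1)$-tuples) of distinct elements: the permutation module $M^{(n-m,1^m)}$-type spaces decompose into hooks $\lambda_0,\dots,\lambda_m$, so $\sum_{j \le m}\|\widehat{f}(\lambda_j)\|_F^2$ (with appropriate multiplicities) is controlled by $\sum_{I,J \in [n]_m}\big(\Pr_{\pi\in A}[\pi(I)=J] - \tfrac{1}{(n)_m}\big)^2$, a Fourier-style expression for the deviation of the $m$-tuple action from uniform. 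Hypothesis (ii) says each such probability is at most $c^m/(n)_m$, which gives $\sum_{I,J}\Pr[\pi(I)=J]^2 \le \frac{c^m}{(n)_m}\sum_{I,J}\Pr[\pi(I)=J] = \frac{c^m}{(n)_m}\cdot(n)_m \cdot 1 \cdot \frac{1}{|A|}\cdot|A|$... more carefully, $\sum_{I}\sum_J \Pr_{\pi\in A}[\pi(I)=J]^2 \le \frac{c^m}{(n)_m}\sum_I \sum_J \Pr[\pi(I)=J] = \frac{c^m}{(n)_m}\cdot (n)_m = c^m$ — wait, I need to track the $1/|A|$ normalization between "counting measure $\widehat{f}$" and "probability". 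Setting this up so the bookkeeping comes out is the technical heart. The upshot should be that each hook's contribution to the error is at most (roughly) $c^{2j}\cdot\frac{|A|^2|\mathcal{N}|}{n!}\cdot\binom{n-1}{j}^{-1}\cdot(\text{mult})$, and summing the geometric-ish series over $j$ from $1$ to $n-2$ — using that $c$ is a small constant and $\binom{n-1}{j}$ grows — gives a total error strictly less than the main term $\frac{|A|^2|\mathcal{N}|}{n!}$. Hence $\langle f*g,f\rangle > 0$, so some pair in $A$ differs by an $n$-cycle. I would then check that $c = \sqrt{2}$ is small enough to make the series bound work, completing the "in particular" clause. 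The delicate points are: (a) getting the exact multiplicities of hook summands inside the $m$-tuple permutation module, and (b) ensuring the even-$m$ restriction in hypothesis (ii) suffices — presumably because the relevant cancellations pair up odd and even levels, or because the odd-$j$ hooks can be bounded via the even-$m$ hypotheses at $m = j\pm 1$.
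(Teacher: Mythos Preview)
Your overall strategy is exactly the paper's: expand the pair count via Fourier analysis on $S_n$, use Murnaghan--Nakayama to restrict to hook representations, exploit that for odd $n$ the sign representation reinforces (rather than cancels) the trivial main term under assumption (i), and bound the hook Fourier weights via the permutation module on ordered tuples together with hypothesis (ii). You also correctly anticipate both delicate points the paper actually deals with.

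There is, however, a concrete bookkeeping error in your expansion. The count is
\[
\sum_{\pi,\pi'\in A} g(\pi(\pi')^{-1})
=\sum_{\lambda}\frac{d_\lambda}{n!}\,\Tr\bigl(\widehat f(\lambda)\,\widehat g(\lambda)\,\widehat f(\lambda)^*\bigr)
=\frac{|\mathcal N|}{n!}\sum_\lambda \chi_\lambda((n))\,\|\widehat f(\lambda)\|_F^2,
\]
because the $d_\lambda$ from Plancherel exactly cancels the $1/d_\lambda$ in $\widehat g(\lambda)=\frac{|\mathcal N|\chi_\lambda((n))}{d_\lambda}I$. So the factor $\chi_\lambda((n))/d_\lambda$ you wrote does \emph{not} appear; the coefficient of $\|\widehat f(h_j)\|_F^2$ is just $(-1)^j$, with no $1/\binom{n-1}{j}$ suppression. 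This matters: with your stated rough estimate $\|\widehat f(h_j)\|_F^2 \lesssim c^{2j}|A|^2$ alone, the alternating sum $\sum_j (-1)^j c^{2j}$ would not be controllable.

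The missing suppression comes entirely from the multiplicities you flag in point~(a). Decomposing the permutation module on ordered $k$-tuples gives Kostka numbers $K_{h_m,h_k}=\binom{k}{m}$, so your $\ell^2$ bound $\sum_{I,J}\Pr[\pi(I)=J]^2\le c^k$ yields
\[
\|\widehat f(h_m)\|_F^2/|A|^2 \;\le\; \frac{c^{k}-1}{\binom{k}{m}}\qquad(\text{any even }k\ge m),
\]
after peeling off the trivial summand. Taking $k=2m$ (and using the conjugation symmetry $\|\widehat f(h_m)\|_F^2=\|\widehat f(h_{n-1-m})\|_F^2$ from assumption (i) to handle $m>n/2$) gives exactly the decay needed to make the error sum $\sum_{m\ \text{odd}}\frac{c^{2m}-1}{\binom{2m}{m}}<1$ for $c=\sqrt 2$. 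So your plan works once you relocate the source of the $\binom{\cdot}{\cdot}^{-1}$ damping from the character ratio to the Kostka multiplicity; this is precisely how the paper carries it out.
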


\paragraph{Remark.} In the second condition above, we consider only even $m$. This is because if this condition fails, we are going to use our other analysis to recursively consider permutations of $[n-m]$, and would like $n-m$ to also be odd.

We prove Proposition~\ref{prop:main} below, and then show that it implies Theorem~\ref{thm:lower_bound_perm}.

\begin{proof}
First, note that by replacing all $\pi\in A$ by $\pi \sigma$ for some odd permutation $\sigma$ if necessary, it suffices to assume that all $\pi\in A$ are even. We will assume this henceforth.

\paragraph{Rephrasing the problem using class functions.} Let $\CC'_n$ denote the set of $n$-cycles in $S_n$.
Define two class functions $\varphi,\psi \in \R[S_n]$ as
$$
\varphi = \frac{1}{|S_n| |A|^2} \sum_{\sigma \in S_n, \pi,\pi' \in A} \sigma \pi (\pi')^{-1} \sigma^{-1}, \qquad
\psi = \frac{1}{|\CC'_n|} \sum_{\tau \in \CC'_n} \tau.
$$
It is easy to see that our conclusion is equivalent to showing that $\ip{\varphi,\psi}>0$.

Let $\lambda \vdash n$ denote a partition of $n$, namely $\lambda=(\lambda_1,\ldots,\lambda_k)$ where $\lambda_1 \ge \ldots \ge \lambda_k \ge 1$ and $\sum \lambda_i=n$.
The irreducible representations of $S_n$ are the Specht modules, which are indexed by partitions $\{S^{\lambda}: \lambda \vdash n\}$. Let $\chi^{\lambda}:S_n \to \R$ denote their
corresponding characters. Their action extends linearly to $\R[S_n]$. Namely, if $\zeta \in \R[S_n]$ is given by
$\zeta= \sum_{\pi \in S_n} \zeta_{\pi} \pi \in \R[S_n]$ where $\zeta_{\pi} \in \R$ then
$\chi^{\lambda}(\zeta) = \sum_{\pi \in S_n} \zeta_{\pi} \chi^{\lambda}(\pi)$.

As $\varphi, \psi \in \R[S_n]$ are class functions, their inner product equals
\begin{equation}\label{eq:ip1}
\ip{\varphi, \psi} = \sum_{\lambda \vdash n} \chi^{\lambda}(\varphi) \chi^{\lambda}(\psi).
\end{equation}
Let $(n) \in \CC'_n$ be a fixed cycle of length $n$. As all elements in $\psi$ are conjugate to $(n)$,
we have $\chi^{\lambda}(\psi) = \chi^{\lambda}((n))$ and we can simplify Equation~\eqref{eq:ip1} to
\begin{equation}\label{eq:ip2}
\ip{\varphi, \psi} = \sum_{\lambda \vdash n} \chi^{\lambda}(\varphi) \chi^{\lambda}((n)).
\end{equation}
Thus, we are lead to explore the action of the irreducible characters on the full cycle $(n)$.

\paragraph{Characters action on the full cycle.}
The Murnaghan-Nakayama rule is a combinatorial method to compute the value of a character $\chi^{\lambda}$ on a conjugacy class, which in our case is $(n)$.
In this special case it is very simple. It equals zero unless $\lambda$ is a hook, e.g. its corresponding tableaux has only one row and one column, and otherwise its either $-1$ or $1$.
Concretely, let $h_m = (n-m,1,1,\ldots,1)$ for $0 \le m \le n-1$ denote the partition corresponding to a hook. Then
\begin{equation}\label{eq:MN}
\chi^{\lambda}((n)) = \bigg \{
\begin{array}{ll}
(-1)^m & \text{if } \lambda=h_m\\
0 & \text{otherwise}
\end{array}.
\end{equation}
Thus we can simplify Equation~\eqref{eq:ip2} to
\begin{equation}\label{eq:ip3}
\ip{\varphi, \psi} = \sum_{m=0}^{n-1} (-1)^m \chi^{h_m}(\varphi).
\end{equation}

\paragraph{Bounding the characters on $\varphi$.}
The character $h_0$ corresponds to the trivial representation, and by our definition of $\varphi$ it equals $\chi^{h_0}(\varphi)=1$.
Observe that we can simplify $\chi^{\lambda}(\varphi)$ as
\begin{equation}\label{eq:chi_varphi}
\chi^{\lambda}(\varphi) = \frac{1}{|A|^2 |S_n|} \sum_{\pi,\pi' \in A, \sigma \in S_n} \chi^{\lambda}(\sigma \pi (\pi')^{-1} \sigma^{-1})
= \frac{1}{|A|^2} \sum_{\pi,\pi' \in A} \chi^{\lambda}(\pi (\pi')^{-1}).
\end{equation}

First, we argue that the evaluation of characters on $\varphi$ is always nonnegative.
\begin{claim}
\label{claim:nonnegative}
$\chi^{\lambda}(\varphi) \ge 0$ for all $\lambda \vdash n$.
\end{claim}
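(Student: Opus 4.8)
The key observation is that $\chi^\lambda(\varphi)$ can be rewritten so that it is manifestly a sum of squares of norms. Starting from \eqref{eq:chi_varphi}, I would bring in the matrix form of the representation. Fix an orthogonal representation $\rho^\lambda : S_n \to O(d_\lambda)$ affording the character $\chi^\lambda$, so that $\chi^\lambda(\pi) = \Tr(\rho^\lambda(\pi))$ and $\rho^\lambda(\pi^{-1}) = \rho^\lambda(\pi)^T$. Then
\[
\chi^\lambda(\varphi) = \frac{1}{|A|^2}\sum_{\pi,\pi'\in A}\Tr\!\left(\rho^\lambda(\pi)\rho^\lambda(\pi')^T\right)
= \frac{1}{|A|^2}\,\Tr\!\left(\Big(\sum_{\pi\in A}\rho^\lambda(\pi)\Big)\Big(\sum_{\pi'\in A}\rho^\lambda(\pi')\Big)^T\right).
\]
Writing $M = \sum_{\pi\in A}\rho^\lambda(\pi)$, the right-hand side is $\frac{1}{|A|^2}\Tr(MM^T) = \frac{1}{|A|^2}\|M\|_F^2 \ge 0$, which is exactly the claim.

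**Steps, in order.** First I would recall that every irreducible representation of $S_n$ can be realized over $\R$ by real orthogonal matrices (e.g.\ Young's orthogonal form, as in Sagan~\cite{sagan2013symmetric}); this is what licenses $\rho^\lambda(\pi)^{-1} = \rho^\lambda(\pi)^T$. Second, I would substitute the trace formula into \eqref{eq:chi_varphi} and use linearity of the trace together with the multiplicativity $\rho^\lambda(\pi(\pi')^{-1}) = \rho^\lambda(\pi)\rho^\lambda(\pi')^T$ to factor the double sum into a single matrix product $MM^T$ with $M = \sum_{\pi\in A}\rho^\lambda(\pi)$. Third, I would identify $\Tr(MM^T)$ with the squared Frobenius norm $\|M\|_F^2 = \sum_{i,j} M_{ij}^2$, which is nonnegative, giving $\chi^\lambda(\varphi) = \|M\|_F^2/|A|^2 \ge 0$. (As a sanity check, for $\lambda = h_0$ the trivial representation this gives $\chi^{h_0}(\varphi) = |A|^2/|A|^2 = 1$, matching the value asserted just above the claim.)

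**Main obstacle.** There is essentially no hard step here — the only point requiring care is the appeal to real orthogonality of the irreducibles, i.e.\ that the Specht modules over $\R$ carry an $S_n$-invariant inner product so that the matrices can be taken orthogonal. This is standard since $S_n$ is finite (average any inner product over the group), and for $S_n$ one even has the explicit Young orthogonal form. Given that, the computation is a one-line factorization. An equivalent and perhaps even slicker phrasing avoids matrices entirely: since $\varphi$ is (up to normalization) of the form $\zeta\zeta^*$ where $\zeta = \frac{1}{|S_n|^{1/2}|A|}\sum_{\sigma,\pi}\sigma\pi$ and $*$ is the involution $\pi\mapsto\pi^{-1}$ extended conjugate-linearly, the value of any character on such an element is nonnegative because $\chi^\lambda(\zeta\zeta^*)$ is the squared Hilbert--Schmidt norm of $\rho^\lambda(\zeta)$; I would present whichever version is cleaner, but both rest on the same orthogonality fact.
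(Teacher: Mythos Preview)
Your proof is correct and essentially identical to the paper's: both factor $\chi^{\lambda}(\varphi)$ as the squared Frobenius norm of $\sum_{\pi\in A}\rho^{\lambda}(\pi)$ (the paper writes this as $S^{\lambda}(\zeta)$ with $\zeta=\frac{1}{|A|}\sum_{\pi\in A}\pi$), relying on the orthogonality of the representation to turn $\rho^{\lambda}(\pi^{-1})$ into $\rho^{\lambda}(\pi)^{T}$. Your write-up is a bit more explicit about justifying the existence of a real orthogonal form, but the argument is the same.
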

\begin{proof}
Let $\zeta \in \R[S_n]$ be given by $\zeta = \frac{1}{|A|} \sum_{\pi \in A} \pi$.
Then
$$
\chi^{\lambda}(\varphi) = \frac{1}{|A|^2} \sum_{\pi,\pi' \in A} \Tr \left(S^{\lambda}(\pi) S^{\lambda}((\pi')^{-1}) \right) = \Tr \left(S^{\lambda}(\zeta) S^{\lambda}(\zeta)^T\right)
= \|S^{\lambda}(\zeta)\|_F^2,
$$
where for a matrix $M$ its Frobenius norm is given by $\|M\|_F^2 = \sum |M_{i,j}|^2$. In particular it is always nonnegative.
\end{proof}

The following lemma bounds $\chi^{h_m}(\varphi)$. Observe that in particular for $c=1$ it gives $\chi^{h_m}(\varphi)=0$. However, we would use it to
obtain effective bounds when $c>1$.

\begin{lemma}
\label{lemma:bound_chi}
Let $m \in \{1,\ldots,n-1\}$. For any
even $k \in \{m,\ldots,n\}$ it holds that $\chi^{h_m}(\varphi) \le \frac{c^{k}-1}{{k \choose m}}$.
\end{lemma}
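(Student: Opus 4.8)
The plan is to bound $\chi^{h_m}(\varphi)$ by comparing it against $\chi^{\rho_k}(\varphi)$, where $\rho_k$ denotes the permutation representation of $S_n$ on the set $[n]_k$ of ordered $k$-tuples of distinct elements, and then to estimate $\chi^{\rho_k}(\varphi)$ directly from hypothesis (ii). First I would record, by the same computation as in~\eqref{eq:chi_varphi}, that for any representation $R$ of $S_n$ we have $\chi^{R}(\varphi)=\frac{1}{|A|^2}\sum_{\pi,\pi'\in A}\chi^{R}(\pi(\pi')^{-1})$, since $\varphi$ is an average of conjugates of the elements $\pi(\pi')^{-1}$. Because $\chi^{\rho_k}(g)$ equals the number of $J\in[n]_k$ with $g(J)=J$, substituting $J\mapsto\pi'(J)$ (a bijection of $[n]_k$) rewrites the equation $\pi(\pi')^{-1}(J)=J$ as $\pi(J)=\pi'(J)$, so that
\[
\chi^{\rho_k}(\varphi)=\sum_{J\in[n]_k}\Pr_{\pi,\pi'\in A}\!\big[\pi(J)=\pi'(J)\big]=\sum_{J\in[n]_k}\ \sum_{K\in[n]_k}\Pr_{\pi\in A}[\pi(J)=K]^2 .
\]

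Next I would plug in hypothesis (ii). Since $k$ is even and $k<n$ (in the setting of Proposition~\ref{prop:main}, $n$ is odd and $k$ is even, so $k\le n-1$ always holds, which is all that is used), every term $\Pr_{\pi\in A}[\pi(J)=K]$ is smaller than $c^{k}/(n)_k$; as for each fixed $J$ these probabilities sum to $1$ over $K$, the inner sum above is $<c^{k}/(n)_k$, and summing over the $(n)_k$ tuples $J$ gives $\chi^{\rho_k}(\varphi)<c^{k}$. It remains to isolate the contribution of $S^{h_m}$ inside $\rho_k$. For this I would decompose $\rho_k$ into irreducibles: identifying $[n]_k\cong S_n/S_{n-k}$ as $S_n$-sets shows $\rho_k=\mathrm{Ind}_{S_{n-k}}^{S_n}\mathbf{1}$, so by Frobenius reciprocity together with the iterated branching rule the multiplicity of $S^{\lambda}$ in $\rho_k$ equals the number of chains $\lambda=\lambda^{(n)}\supset\lambda^{(n-1)}\supset\cdots\supset\lambda^{(n-k)}=(n-k)$ obtained by deleting one cell at a time. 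For $\lambda=h_m=(n-m,1^m)$ with $k\le n-1$, any such chain is forced to delete the $m$ cells of the first column lying below the first row together with $k-m$ cells from the right end of the first row, and these $k$ deletions may be carried out in an arbitrary relative order (each intermediate shape is a legal partition, since the first row never drops below length $n-k\ge1$); hence the multiplicity of $S^{h_m}$ in $\rho_k$ is $\binom{k}{m}$, and in particular the trivial module $S^{h_0}$ occurs with multiplicity $\binom{k}{0}=1$. (Equivalently, Pieri's rule applied to $\rho_k=\bigoplus_{\mu\vdash k}(\dim S^{\mu})\,\mathrm{Ind}_{S_{n-k}\times S_k}^{S_n}(\mathbf{1}\boxtimes S^{\mu})$ produces this count as $\binom{k-1}{m}+\binom{k-1}{m-1}$.)

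Finally I would put the pieces together. Writing $\chi^{\rho_k}=\sum_{\lambda\vdash n}(\text{multiplicity of }S^{\lambda})\,\chi^{\lambda}$, keeping only the $\lambda=h_0$ and $\lambda=h_m$ terms (legitimate because every $\chi^{\lambda}(\varphi)\ge0$ by Claim~\ref{claim:nonnegative}, and $h_0\ne h_m$ since $m\ge1$), and using $\chi^{h_0}(\varphi)=1$, I get
\[
c^{k}>\chi^{\rho_k}(\varphi)\ \ge\ 1+\binom{k}{m}\,\chi^{h_m}(\varphi),
\]
which rearranges to $\chi^{h_m}(\varphi)<(c^{k}-1)/\binom{k}{m}$, proving the lemma. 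The main obstacle is the middle step — correctly computing the multiplicity of the hook $S^{h_m}$ inside the $k$-tuple module $\rho_k$ (equivalently, the branching/Pieri bookkeeping); the surrounding computation is a short positivity argument whose only inputs are hypothesis (ii) and Claim~\ref{claim:nonnegative}.
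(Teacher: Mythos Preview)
Your proof is correct and follows essentially the same route as the paper: the module you call $\rho_k$ is exactly the Young permutation module $M^{h_k}$, and your collision-probability computation of $\chi^{\rho_k}(\varphi)$ is the same as the paper's Frobenius-norm bound $\|M^{h_k}(\zeta)\|_F^2\le c^k$. The only cosmetic difference is that the paper obtains the multiplicity $\binom{k}{m}$ by quoting the Kostka number $K_{h_m,h_k}$ via Young's rule, whereas you compute it by Frobenius reciprocity and the iterated branching rule (or Pieri); these are equivalent derivations of the same decomposition.
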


\begin{proof}
Let $M^{\mu}$ denote the (not irreducible) Young module associated with a partition $\mu \vdash n$.
In the case of $\mu = h_k$ it corresponds to the action of $S_n$ on $[n]_k$. That is, for any $\pi \in S_n$ we have that $M^{h_k}(\pi)$ is a
matrix whose rows and columns are indexed by $I,J \in [n]_k$ respectively, where $M^{h_k}(\pi)_{I,J}=1_{\pi(I)=J}$. Observe that
$M^{h_k}(\pi^{-1})=\left(M^{h_k}(\pi)\right)^T$.
We extend this action to $\R[S_n]$ linearly.

Recall that $\zeta = \frac{1}{|A|} \sum_{\pi \in A} \pi \in \R[S_n]$. By assumption (ii) in Proposition~\ref{prop:main} we have
$$
\left( M^{h_k}(\zeta) \right)_{I,J} = \Pr_{\pi \in A}[\pi(I)=J] \le \frac{c^{k}}{(n)_k}.
$$
Thus, we can bound the Frobenius norm of $M^{h_k}(\zeta)$ by
$$
\| M^{h_k}(\zeta)\|_F^2 = \sum_{I,J} |\left( M^{h_k}(\zeta) \right)_{I,J}|^2 \leq \left(\frac{c^{k}}{(n)_k} \right)\sum_{I,J} |\left( M^{h_k}(\zeta) \right)_{I,J}| = c^k.
$$
This is useful as
$$
\Tr(M^{h_k} (\varphi)) = \Tr \left( M^{h_k}(\zeta) \left(M^{h_k}(\zeta) \right)^T \right) = \| M^{h_k}(\zeta)\|_F^2 \le c^{k}.
$$

The Kostka numbers $K_{\lambda, \mu}$ denote the multiplicity of the Specht module $S^{\lambda}$ in the Young module $M^{\mu}$.
We can thus decompose
$$
\Tr(M^{\mu} (\varphi)) = \sum_{\lambda} K_{\lambda,\mu} \chi^{\lambda}(\varphi).
$$
We saw that $\chi^{\lambda}(\varphi) \ge 0$ for all $\lambda$.
By Young's rule, $K_{\lambda, \mu}$ equals the number of semistandard tableaux of shape $\lambda$ and content $\mu$. In particular, it is always a nonnegative integer.
In the special case of $\lambda=h_m$ and $\mu=h_k$ for $k \ge m$, Young's rule is simple to compute and gives
$$
K_{h_m, h_k} = {k \choose m}.
$$
Recall that $\chi^{h_0}$ is the trivial representation, for which $K_{h_0, h_k}=1$ and $\chi^{h_0}(\varphi)=1$. Thus
$$
1+{k \choose m} \chi^{h_m}(\varphi) \le \sum_{\lambda} K_{\lambda,h_k} \chi^{\lambda}(\varphi) = \Tr(M^{h_k} (\varphi)) \le c^{k}.
$$
\end{proof}

We next apply Lemma~\ref{lemma:bound_chi} to bound $\chi^{h_m}(\varphi)$ for all $1 \le m \le n-1$.
If $m \le n/2$ then we can apply Lemma~\ref{lemma:bound_chi} for $k=2m$ and obtain the bound
$$
\chi^{h_m}(\varphi) \le \frac{c^{2m}-1}{{2m \choose m}}.
$$
For $m > n/2$ we need the following claim, relating $\chi^{h_m}$ to $\chi^{h_{n-1-m}}$.

\begin{claim}
\label{claim:dual}
For any $1 \le m \le n-1$ it holds that $\chi^{h_m}(\varphi) = \chi^{h_{n-1-m}}(\varphi)$.
\end{claim}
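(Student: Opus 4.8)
The plan is to use the well-known tensoring-by-sign duality for Specht modules: $S^{\lambda'} \cong S^{\lambda} \otimes \mathrm{sgn}$, where $\lambda'$ is the conjugate partition. On characters this reads $\chi^{\lambda'}(\pi) = \mathrm{sgn}(\pi)\,\chi^{\lambda}(\pi)$ for all $\pi \in S_n$. The key observation is that the hooks are closed under conjugation in a very clean way: the conjugate of $h_m = (n-m,1,1,\ldots,1)$ (which has one row of length $n-m$ and $m$ rows of length $1$ beneath, so $m+1$ rows total) is $h_{n-1-m} = (m+1, 1, 1, \ldots, 1)$ (one row of length $m+1$ and $n-1-m$ rows of length $1$). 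So $\chi^{h_{n-1-m}}(\pi) = \mathrm{sgn}(\pi)\,\chi^{h_m}(\pi)$.

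Now I would extend this identity linearly to $\varphi \in \R[S_n]$. By Equation~\eqref{eq:chi_varphi}, $\chi^{h_{n-1-m}}(\varphi) = \frac{1}{|A|^2}\sum_{\pi,\pi' \in A} \chi^{h_{n-1-m}}(\pi(\pi')^{-1}) = \frac{1}{|A|^2}\sum_{\pi,\pi' \in A} \mathrm{sgn}(\pi(\pi')^{-1})\,\chi^{h_m}(\pi(\pi')^{-1})$. Here I invoke hypothesis (i) of Proposition~\ref{prop:main}: all elements of $A$ have the same sign, so $\mathrm{sgn}(\pi(\pi')^{-1}) = \mathrm{sgn}(\pi)\mathrm{sgn}(\pi')^{-1} = 1$ for every pair $\pi,\pi' \in A$. (In fact after the normalization at the start of the proof we may assume all $\pi \in A$ are even.) Therefore the sign factor disappears and the sum collapses to $\frac{1}{|A|^2}\sum_{\pi,\pi'\in A}\chi^{h_m}(\pi(\pi')^{-1}) = \chi^{h_m}(\varphi)$, which is exactly the claim.

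There is essentially no analytic obstacle here; the only thing to get right is the bookkeeping of conjugate partitions — verifying that $(h_m)' = h_{n-1-m}$ and in particular that this is a valid partition of $n$ for $1 \le m \le n-1$ (it is: $(m+1) + (n-1-m)\cdot 1 = n$). The one genuinely load-bearing ingredient, beyond the standard representation theory of $S_n$, is condition (i) of Proposition~\ref{prop:main}; without the same-sign assumption the two characters would differ by the sign character and the identity would fail. It is worth noting explicitly in the writeup why this duality is useful downstream: combined with the earlier bound $\chi^{h_m}(\varphi) \le (c^{2m}-1)/\binom{2m}{m}$ valid for $m \le n/2$, Claim~\ref{claim:dual} immediately transfers a good bound to all $m > n/2$ as well, so that every term in Equation~\eqref{eq:ip3} except $m=0$ can be controlled.
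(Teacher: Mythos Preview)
Your proof is correct and follows essentially the same approach as the paper: use the conjugate-partition identity $\chi^{\lambda'}(\pi)=\sign(\pi)\,\chi^{\lambda}(\pi)$, observe that $(h_m)'=h_{n-1-m}$, and invoke the same-sign assumption on $A$ so that $\sign(\pi(\pi')^{-1})=1$ for all $\pi,\pi'\in A$. The paper's proof is identical in substance, only slightly terser.
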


\begin{proof}
For any partition $\lambda$ let $\lambda'$ denote the transpose (also known as conjugate) partition. It satisfies $\chi^{\lambda'}(\pi) = \chi^{\lambda}(\pi) \sign(\pi)$ for all $\pi \in S_n$,
where $\sign:S_n \to \{-1,1\}$ is the sign representation. As all elements in $A$ are even permutations, it holds
by the definition of $\varphi$ that
$$
\chi^{\lambda'}(\varphi) = \frac{1}{|A|^2} \sum_{\pi,\pi' \in A} \chi^{\lambda'} (\pi (\pi')^{-1}) = \frac{1}{|A|^2} \sum_{\pi,\pi' \in A} \chi^{\lambda} (\pi (\pi')^{-1}) =
\chi^{\lambda}(\varphi).
$$
In particular if $\lambda=h_m$ then $\lambda'=h_{n-1-m}$.
\end{proof}

Next, we lower bound $\ip{\varphi, \psi'}$ as follows. The dominant terms are $\chi^{h_0}(\varphi)=\chi^{h_{n-1}}(\varphi)=1$. For any $1 \le m \le (n-1)/2-1$, the corresponding
term in Equation~\eqref{eq:ip3} appears twice, once as $(-1)^m \chi^{h_m}(\varphi)$ and once as $(-1)^{n-1-m} \chi^{h_{n-1-m}}(\varphi)=(-1)^m \chi^{h_m}(\varphi)$.
The term for $m=(n-1)/2$ appears once.

Furthermore, as $\chi^{h_m}(\varphi) \ge 0$ for all $m$ by Claim~\ref{claim:nonnegative}, the only negative terms correspond to odd $1 \le m \le (n-1)/2$. Thus we can lower bound
\begin{equation}\label{eq:ip_lb}
\frac{1}{2} \ip{\varphi, \psi'} \ge 1 - \sum_{m \ge 1,\; m \text{ odd}} \frac{c^{2m}-1}{{2m \choose m}}.
\end{equation}
It is not hard to show that this is positive if $c>1$ is small enough. If we take $c=\sqrt{2}$, the right hand side of Equation \eqref{eq:ip_lb} is slightly negative for large enough $m$ (the limit as $m\rightarrow\infty$ is $-0.02451...$). However, when $n\geq 8$, the second term can be replaced by $\frac{c^8-1}{\binom{8}{3}}$ rather than $\frac{c^6-1}{\binom{6}{3}}$, making our lower bound on $\frac{1}{2} \ip{\varphi, \psi'}$ at least $0.057$. This completes our proof.

\end{proof}

We are now prepared to prove Theorem \ref{thm:lower_bound_perm}.
\begin{proof}
We first prove that if $n$ is odd and if all permutations in $A$ have the same sign, then $|A|\leq \frac{n!}{2^{(n-1)/2}}.$

We proceed by induction on $n$. Firstly, we note that if $n=1$, the bound follows trivially.

For odd $n>1$, we note that unless there is some even $m<n$ and some $I,J\in [n]_m$ with $\Pr_{\pi\in A}[\pi(I)=J] \ge 2^{m/2}/(n)_m$, then our result follows immediately from Proposition \ref{prop:main}. Otherwise, we may assume without loss of generality that $I=J=(n-m+1,\ldots,n)$. It then follows that letting $A'=\{\pi\in A: \pi(I)=J\}$, we can think of $A'$ as a set of permutations on $[n-m]$. Also, note that $A$ being an independent set for $\B_n$, implies that $A'$ is an independent set for $\cay(S_{n-m}, \CC_{n-m})$. Therefore, by the inductive hypothesis:
$$
|A| \leq (n)_m2^{-m/2}|A'| \leq (n)_m2^{-m/2} (n-m)!/2^{(n-m-1)/2} = n!/2^{(n-1)/2}.
$$

We now need to reduce to the case of $n$ odd and $A$ consisting only of permutations of the same sign. First, restricting $A$ to only permutations of the most common sign, we can assume that all permutations in $A$ have the same sign, losing only a factor of $2$ in $|A|$. Now, if $n$ is odd, we are done. otherwise, let $j$ be the most likely value of $\pi(n)$ for $\pi$ taken from $A$. We have that $\Pr_{\pi\in A}[\pi(n)=j] \geq 1/n$. Without loss of generality, $j=n$ and we can let $A'=\{\pi\in A:\pi(n)=n\}$. Since $A'$ is an independent set in $\cay(S_{n-1}, \CC_{n-1})$, and since $n-1$ is odd, we have
$$
|A| \leq n|A'| \leq n (n-1)!/2^{(n-2)/2} = n!/2^{n/2-1}.
$$
\end{proof}

\bibliographystyle{abbrv}
\bibliography{no_zero_cycles}

\appendix

\section{A construction of a larger independent set}
\label{sec:better_example}

We prove Theorem~\ref{thm:better_example} in this section.
Assume that $n=2^m$. We construct $A \subset S_n$ of size $|A| \ge n! / 4^n$, such that $A$ is an independent set in $\B_n$.

Let $T_{i,j} = \{2^{m-i}(j-1)+1,\ldots,2^{m-i}j\}$ for $0 \le i \le m, 1 \le j \le 2^i$.
Note that $\{T_{i,j}: j \in [2^i]\}$ is a partition of $[n]$ for every $i$, that $|T_{i,j}|=2^{m-i}$ and that $T_{i,2j-1} \cup T_{i,2j}$ is a partition of $T_{i-1,j}$.

We define a sequence of subsets of $S_n$. For $1 \le i \le m$ let $M_i = {2^{m-i+1} \choose 2^{m-i}}$. For any set $R$ of size $|R|=2^{m-i+1}$ let
$\ind_i(R,\cdot)$ be a bijection between subsets of $R$ of size $2^{m-i}$ and $\Z_{M_i}$. Define $A_0 = S_n$ and
$$
A_i = \left\{\pi \in A_{i-1}: \sum_{j=1}^{2^{i-1}} \ind_{i}(\pi(T_{i-1,j}), \pi(T_{i,2j-1})) \equiv 0 \mod M_i\right\}.
$$
Since each value mod $M_i$ occurs equally often as a $\ind_{i}(\pi(T_{i-1,j}), \pi(T_{i,2j-1}))$ for each $j$, and since these values are independent of one another, $|A_i| = |A_{i-1}| / M_i$.
Finally set $A=A_m$. The following claim (applied for $i=m$) shows that $A$ is an independent set in $\B_n$.

\begin{claim}
Let $1 \le i \le m$. Let $\pi,\pi' \in A_i$ be such that $\tau = \pi (\pi')^{-1} \in \CC_n$. Then there exists $j_i \in [2^i]$ such that
\begin{enumerate}
\item $\tau(T_{i,j_i})=T_{i,j_i}$.
\item $\tau(x)=x$ for all $x \in T_{i,j}, j \ne j_i$.
\end{enumerate}
\end{claim}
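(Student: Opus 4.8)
The plan is to prove the claim by induction on $i$. The base case $i=1$ should follow directly from the definition of $A_1$: the condition $\ind_1(\pi(T_{0,1}), \pi(T_{1,1})) \equiv 0 \bmod M_1$ together with the same condition for $\pi'$ forces $\pi(T_{1,1}) = \pi'(T_{1,1})$ as subsets of $[n]$ (since $T_{0,1} = [n]$ and $\ind_1$ is a bijection, the sums being equal mod $M_1$ means the single terms are equal). Hence $\tau = \pi(\pi')^{-1}$ fixes the set $\pi'(T_{1,1}) = \pi(T_{1,1})$, and likewise fixes its complement $\pi(T_{1,2})$. Since $\tau$ has a single nontrivial cycle, that cycle lies entirely within one of these two blocks, giving the desired $j_1 \in \{1,2\}$ with $\tau$ fixing $T_{1,j}$ pointwise for $j \ne j_1$ and $\tau(T_{1,j_1}) = T_{1,j_1}$.

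For the inductive step, I would assume the claim holds for $i-1$ and derive it for $i$. So suppose $\pi, \pi' \in A_i \subseteq A_{i-1}$ with $\tau = \pi(\pi')^{-1} \in \CC_n$. By the inductive hypothesis there is $j_{i-1} \in [2^{i-1}]$ with $\tau(T_{i-1,j_{i-1}}) = T_{i-1,j_{i-1}}$ and $\tau$ fixing every other block $T_{i-1,j}$ pointwise. The key observation is that for every $j \ne j_{i-1}$, since $\tau$ fixes $T_{i-1,j}$ pointwise, we have $\pi(x) = \pi'(x)$ for all $x \in T_{i-1,j}$, so in particular $\pi(T_{i,2j-1}) = \pi'(T_{i,2j-1})$, which means the corresponding summands in the defining congruences for $A_i$ agree for both $\pi$ and $\pi'$. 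Subtracting the two congruences (both $\equiv 0 \bmod M_i$) therefore leaves only the $j = j_{i-1}$ term: $\ind_i(\pi(T_{i-1,j_{i-1}}), \pi(T_{i,2j_{i-1}-1})) \equiv \ind_i(\pi'(T_{i-1,j_{i-1}}), \pi'(T_{i,2j_{i-1}-1})) \bmod M_i$. Here I need that $\pi(T_{i-1,j_{i-1}}) = \pi'(T_{i-1,j_{i-1}})$ so that $\ind_i$ is being applied with the \emph{same} reference set $R$ on both sides — and this holds because $\tau$ maps $T_{i-1,j_{i-1}}$ to itself, hence $\pi(T_{i-1,j_{i-1}}) = \tau(\pi'(T_{i-1,j_{i-1}})) = \pi'(T_{i-1,j_{i-1}})$. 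Since $\ind_i(R,\cdot)$ is a bijection, we conclude $\pi(T_{i,2j_{i-1}-1}) = \pi'(T_{i,2j_{i-1}-1})$, and taking complements within $\pi(T_{i-1,j_{i-1}})$ also $\pi(T_{i,2j_{i-1}}) = \pi'(T_{i,2j_{i-1}})$.

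It then follows that $\tau$ maps each of the two halves $T_{i,2j_{i-1}-1}$ and $T_{i,2j_{i-1}}$ to itself; and for all other second-level blocks $T_{i,j'}$ with $j' \notin \{2j_{i-1}-1, 2j_{i-1}\}$, such a block sits inside some $T_{i-1,j}$ with $j \ne j_{i-1}$, which $\tau$ fixes pointwise, so $\tau$ fixes $T_{i,j'}$ pointwise as well. Now the single nontrivial cycle of $\tau$ is confined to $T_{i-1,j_{i-1}} = T_{i,2j_{i-1}-1} \cup T_{i,2j_{i-1}}$, and since $\tau$ stabilizes each of these two halves, that cycle lies in exactly one of them; call the index of that half $j_i$, and then $\tau(T_{i,j_i}) = T_{i,j_i}$ while $\tau$ fixes $T_{i,j}$ pointwise for all $j \ne j_i$, completing the induction.

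I expect the main obstacle — or at least the point requiring the most care — to be bookkeeping the nested block structure correctly: keeping straight that $T_{i-1,j_{i-1}} = T_{i,2j_{i-1}-1} \cup T_{i,2j_{i-1}}$, that ``$\tau$ fixes $T_{i-1,j}$ pointwise'' genuinely upgrades to ``$\pi$ and $\pi'$ agree on $T_{i-1,j}$'' (which is where independence of $A$ under the Cayley structure interacts with the partition), and ensuring that the cancellation of summands is valid because $\ind_i$ is evaluated with a common reference set. None of these steps is computationally heavy, but the argument is entirely structural and a sign or index slip would break it. One should also remark why the nontrivial cycle of $\tau$, being connected, cannot straddle two $\tau$-invariant blocks — this is immediate since a cycle's support is exactly one orbit, but it is worth stating explicitly.
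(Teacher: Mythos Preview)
Your proposal is correct and mirrors the paper's proof essentially line for line: induction on $i$, with the base case read off from the single-summand congruence defining $A_1$, and the inductive step obtained by cancelling all but the $j_{i-1}$-th summand in the two $A_i$-congruences (using that $\pi,\pi'$ already agree on the other level-$(i-1)$ blocks and that $\ind_i$ is evaluated at the same reference set) to force $\pi(T_{i,2j_{i-1}-1})=\pi'(T_{i,2j_{i-1}-1})$, then confining the single nontrivial cycle of $\tau$ to one of the two halves. The paper's write-up is terser, but the structure and every key observation you highlight---common reference set for $\ind_i$, bijectivity, and that a cycle's support is a single orbit so cannot straddle two $\tau$-invariant blocks---appear there as well.
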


\begin{proof}
We prove the claim by induction on $i$. The case of $i=1$ follows from the definition of $A_1$. By assumption $\pi,\pi'$ fix both $T_{1,1}$ and $T_{1,2}$.
However, as $\tau=\pi (\pi')^{-1}$ is a cycle, it must be contained in either $T_{1,1}$ or $T_{1,2}$. This implies that $\tau(x)=x$ for all $x \in T_{1,1}$ or all $x \in T_{1,2}$.

Consider next the case of $i>1$. By induction $\pi(T_{i-1,j})=\pi'(T_{i-1,j})$ for all $j \in [2^{i-1}]$. Moreover, there exists $j'=j_{i-1}$ such that
$\pi(x)=\pi'(x)$ for all $x \in T_{i-1,j}, j \ne j'$. This implies that $\pi(T_{i,j})=\pi'(T_{i,j})$ for all $j \not\in \{2j'-1,2j'\}$.

Next, the assumption that $\pi,\pi' \in A_i$ guarantees that
$$
\sum_{j=1}^{2^{i-1}} \ind_i(\pi(T_{i-1,j}), \pi(T_{i,2j-1})) \equiv \sum_{j=1}^{2^{i-1}} \ind_i(\pi'(T_{i-1,j}),\pi'(T_{i,2j-1})) \equiv 0 \mod M_{i}.
$$
For any $j \ne j'$ we know that $\pi(T_{i-1,j})=\pi'(T_{i-1,j})$ and $\pi(T_{i,2j-1})=\pi'(T_{i,2j-1})$, so $\ind_i(\pi(T_{i-1,j}),\pi(T_{i,2j-1}))=\ind_i(\pi'(T_{i-1,j}),\pi'(T_{i,2j-1}))$.
Thus we obtain that also $\ind_i(\pi(T_{i-1,j'}),\pi(T_{i,2j'-1}))=\ind_i(\pi'(T_{i-1,j'}),\pi'(T_{i,2j'-1}))$. Moreover, as we also know that $\pi(T_{i-1,j'})=\pi'(T_{i-1,j'})$ and that $\ind_i(\pi(T_{i-1,j'}),\cdot)$ is a bijection to $\Z_{M_i}$, it must be the case that $\pi(T_{i,2j'-1})=\pi'(T_{i,2j'-1})$ and hence also $\pi(T_{i,2j'})=\pi'(T_{i,2j'})$.
Thus we conclude that $\pi(T_{i,j})=\pi'(T_{i,j})$ for all $j \in [2^i]$.

To conclude, as $\tau=\pi (\pi')^{-1}$ is a cycle, it must be contained in either $T_{i,2j'-1}$ or $T_{i,2j'}$. Thus, $\tau$ must fix all points in $T_{i,2j'-1}$ or all points in $T_{i,2j'}$. We set $j_i \in \{2j'-1, 2j'\}$ accordingly.
\end{proof}

Finally, we compute the size of $A$. As $|A_i| = |A_{i-1}| / M_i$ and $M_i = {2^{m-i+1} \choose 2^{m-i}} \le 2^{2^{m-i+1}}$
we obtain that
$$
|A| \ge \frac{n!}{\prod_{i=1}^m 2^{2^i}} \ge \frac{n!}{2^{2^{m+1}}}=\frac{n!}{4^n}.
$$

\end{document}